\newcolumntype{L}{>{$}l<{$}}
\DeclareMathSymbol{\shortminus}{\mathbin}{AMSa}{"39}
\newtheorem{theorem}{Theorem}[section]
\newtheorem{lemma}[theorem]{Lemma}
\newtheorem{corollary}[theorem]{Corollary}
\newtheorem{proposition}[theorem]{Proposition}
\theoremstyle{definition}
\newtheorem{remark} [theorem] {Remark}
\theoremstyle{definition}
\newcommand{\F}{{\mathbb{F}}}
\newcommand{\Q}{{\mathbb{Q}}}
\newcommand{\Z}{{\mathbb{Z}}}
\newcommand{\lk}{\ell\text{k}}
\newcommand{\spin}{\text{spin}}
\DeclareMathOperator{\HFL}{HFL}
\DeclareMathOperator{\HFK}{HFK}
\DeclareMathOperator{\HF}{HF}
\DeclareMathOperator{\SFH}{SFH}
\DeclareMathOperator{\rank}{rank}
\DeclareMathOperator{\PD}{PD}
\title{Cable Links, Annuli and Sutured Floer homology}
\author[Binns]{Fraser Binns}
\author[Dey]{Subhankar Dey}
\address[]{Department of Mathematics, Boston College}
\email{binnsf@bc.edu}
\address[]{Department of Mathematics, University of Alabama}
\email{sdey4@ua.edu}
\date{}
\begin{document}

\maketitle

\begin{abstract}
    We apply sutured Floer homology techniques to study the knot and link Floer homologies of various links with annuli embedded in their exteriors. Our main results include, for large $m$, characterizations of links with the same link Floer homology as $(n,nm)$-cables of $L$-space knots or the same knot Floer homology as $(2,2m)$-cables of $L$-space knots. Note that Knot Floer homology carries less grading data than Link Floer homology, so the latter characterizations are stronger than former. These characterizations yield some new link detection results.
\end{abstract}

\section{Introduction}

Knot Floer homology is a powerful link invariant due independently to Ozsv\'ath-Szab\'o~\cite{Holomorphicdisksandknotinvariants} and J.Rasmussen~\cite{Rasmussen}. Much attention has been devoted to the behavior of this invariant under the operation of cabling, under which a torus link is tied into a prescribed knot, see~\cite{hanselman2019cabling,hedden2005knot,gorsky2017cable,dey2019cable}. In this note we consider a question in the opposite direction, namely: if we know a link has the homology type of a cable, can we conclude anything about the link? We will be especially interested in the case of $(n,mn)$-cables of \emph{L-space knots}. For a definition of $L$-space knots see Section~\ref{SFHbackground}. Here we use the convention that the meridional wrapping number is $mn$, while the longitudinal wrapping number is $n$. If $n=0$ then any $(n,mn)$-cable is the empty link, so we take $n\neq 0$ throughout this paper. Unless stated otherwise we also take $n>0$, primarily to state our results more concisely. See Remark~\ref{rmk:negativen} for some discussion.

Our main result is the following:

\newtheorem*{HFLmmn}{Theorem~\ref{HFLmmn}}
\begin{HFLmmn}
Suppose $K$ is a non-trivial $L$-space knot, $m> 2g(K)-1$ and $L$ is a link with $\widehat{\HFL}(L)\cong\widehat{\HFL}(K_{n,mn})$. Then $L$ is isotopic to $K'_{n,nm}$ for some $L$-space knot $K'$ such that $\widehat{\HFK}(K')\cong\widehat{\HFK}(K)$.
\end{HFLmmn}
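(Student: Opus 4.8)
The plan is to recover the topology of the exterior of $L$ from the Floer data in stages: first coarse numerical and norm invariants, then the cabling torus via an annulus-detection argument, and finally the companion knot itself.

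First I would read off the coarse invariants. The number of components is determined by the multi-grading on $\widehat{\HFL}(L)$, so $L$ has $n$ components, and the pairwise linking numbers, encoded by the graded Euler characteristic of $\widehat{\HFL}$ (the symmetrized multivariable Alexander polynomial), agree with those of $K_{n,mn}$; in particular the components are pairwise linked with the cabling linking number. Next, by Ozsv\'ath--Szab\'o's theorem that link Floer homology detects the Thurston norm, the Thurston norm of $S^3\setminus L$ equals that of $S^3\setminus K_{n,mn}$. Since $K$ is an $L$-space knot it is fibered, so $K_{n,mn}$ is a fibered link; as fiberedness is detected by the extremal group of $\widehat{\HFL}$ having minimal rank, the same holds for $L$, and $L$ is fibered with fiber matching the cable.

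The heart of the argument is to locate the cabling torus $T=\partial N(K)$ inside $S^3\setminus L$. Realizing $K_{n,mn}$ as $n$ parallel slope-$m$ curves on $T$, the torus is cut by the components into $n$ essential annuli $A_1,\dots,A_n$, each cobounded by longitudes of adjacent components. I would invoke the paper's sutured-Floer-homology annulus-detection machinery to show $S^3\setminus L$ contains such a cyclic family: decomposing the sutured exterior along a candidate annulus controls $\SFH$ through Juh\'asz's surface-decomposition theorem, and the hypothesis $m>2g(K)-1$ places us in the \emph{stable cabling range}, where $\widehat{\HFL}(K_{n,mn})$ is a clean stack of copies of $\widehat{\HFK}(K)$. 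The resulting rank bounds force the relevant decompositions to be taut and the annuli to be present. Reattaching these annuli to the annular neighborhoods of the components rebuilds an incompressible torus $T_L\subset S^3\setminus L$ separating the exterior into an inner cable-space piece and an outer piece $X'$ with a single torus boundary.

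Finally I would identify the two sides of $T_L$. The inner piece should be recognizable, from its rigid sutured Floer homology in the stable range, as the $(n,mn)$-cable space, so that $L$ is the $(n,mn)$-cable of the knot $K'$ with exterior $X'$; and the decomposition expresses $\SFH(X')=\widehat{\HFK}(K')$ as the companion factor of $\widehat{\HFL}(K_{n,mn})$, namely $\widehat{\HFK}(K)$. Since $\widehat{\HFK}(K')\cong\widehat{\HFK}(K)$ has the staircase form of an $L$-space knot, the characterization of $L$-space knots via knot Floer homology forces $K'$ to be an $L$-space knot, giving $L\cong K'_{n,nm}$. The main obstacle I anticipate is the middle step: upgrading a Floer-homological rank equality to an honest geometric decomposition, i.e. guaranteeing that the annuli detected by $\SFH$ are genuinely essential and that their complement is exactly a knot exterior rather than a different sutured manifold with the same $\SFH$. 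Ensuring tautness of the decomposed sutured manifold, and hence bringing Gabai's and Juh\'asz's machinery fully to bear, is where the large-$m$ hypothesis must do the real work.
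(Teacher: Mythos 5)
Your skeleton---norm detection, Juh\'asz decomposition, identify the companion---matches the paper's, but two of your key steps have genuine gaps. The first is the middle step. You say that ``rank bounds force the relevant decompositions to be taut and the annuli to be present,'' but Juh\'asz's surface-decomposition theorem only computes $\SFH$ \emph{after} you have a surface in hand; it cannot produce the surface. The paper produces it from the Thurston norm: since $\widehat{\HFL}(L)\cong\widehat{\HFL}(K_{n,mn})$ and link Floer homology detects the norm, the class $m(n-1)[\mu_n]-m\sum_{1\leq i\leq n-1}[\mu_i]$ is represented by a surface $\Sigma$ of Euler characteristic $0$, which is a union of embedded annuli once split unknotted components are ruled out (and that is ruled out by the gradings). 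Even then you are not done: a priori the boundary slopes only exhibit $L$ as an $(n,pn)$-cable for some $p$ with $pq=m$, after an orientation/merging argument on the annuli. The paper excludes $p\neq m$ using Hoste's theorem that the Alexander polynomial (equivalently, the graded Euler characteristic of $\widehat{\HFL}$) determines the linking numbers. You record the linking numbers in your first paragraph but never use them, so this step is missing. (The fiberedness discussion, by contrast, is harmless but plays no role.)

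The second gap is the final identification. Decomposing $X(L)$ along the cabling annuli does \emph{not} yield the exterior of $K'$ with meridional sutures: it yields $X(K')$ with $(n-1)$ parallel pairs of sutures of slope $m[\mu]+[\lambda]$, whose sutured Floer homology is $\widehat{\HFK}(K'_m)\otimes V^{n-2}$, of rank $2^{n-2}m$ --- not $\widehat{\HFK}(K')$. This is not a defect of the method; it is exactly how the paper concludes that $K'$ is an $L$-space knot, since $\rank(\widehat{\HFK}(K'_m))=m$ with $m>0$ is the $L$-space condition (in the paper's surgery-formula formulation). Your alternative route fails twice: the decomposition does not hand you $\widehat{\HFK}(K')\cong\widehat{\HFK}(K)$, and the implication ``staircase $\widehat{\HFK}$ $\Rightarrow$ $L$-space knot'' that you invoke afterwards is not a known theorem --- the known implication is the converse, and being an $L$-space knot is a priori a condition on surgeries (equivalently on $\CFK^\infty$), not on $\widehat{\HFK}$ alone. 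To recover $\widehat{\HFK}(K')\cong\widehat{\HFK}(K)$ the paper argues via the Alexander polynomial instead: the cabling formula gives $\Delta_{K'}(t)=\Delta_{K'_{n,mn}}(t)/\Delta_{T(n,mn)}(t^n)=\Delta_K(t)$, and $L$-space knots have knot Floer homology determined by their Alexander polynomials.
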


The main ingredients in the proof of this theorem are the fact that link Floer homology detects the Thurston norm~\cite[Theorem 1.1]{ozsvath2008linkFloerThurstonnorm} and Juh\'asz's surface decomposition formula for sutured Floer homology~\cite[Theorem 1.3]{juhasz2008floer}. This yields some detection results as immediate corollaries:

\newtheorem*{HFLdetects}{Corollary~\ref{HFLdetects}}
\begin{HFLdetects}
Link Floer homology detects:\begin{enumerate}
    \item $T(2,3)_{n,mn}$ for $m> 1$.
    \item $T(2,5)_{n,mn}$ for $m> 3$.
\end{enumerate}
\end{HFLdetects}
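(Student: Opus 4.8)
The plan is to obtain both detection statements as immediate consequences of Theorem~\ref{HFLmmn}, with the only additional input being that knot Floer homology already detects the two torus knots in question. First I would record that $T(2,3)$ and $T(2,5)$ are non-trivial $L$-space knots --- every positive torus knot is an $L$-space knot --- with Seifert genera $g(T(2,3)) = 1$ and $g(T(2,5)) = 2$. Feeding these genera into the hypothesis $m > 2g(K) - 1$ of Theorem~\ref{HFLmmn} accounts for the stated ranges of $m$ (for the trefoil this is exactly $m > 1$).

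Next, suppose $L$ is a link with $\widehat{\HFL}(L) \cong \widehat{\HFL}(T(2,3)_{n,mn})$, and respectively with $\widehat{\HFL}(L) \cong \widehat{\HFL}(T(2,5)_{n,mn})$. Applying Theorem~\ref{HFLmmn} with $K = T(2,3)$, respectively $K = T(2,5)$, produces an $L$-space knot $K'$ such that $L$ is isotopic to $K'_{n,nm}$ and $\widehat{\HFK}(K') \cong \widehat{\HFK}(K)$. It then remains only to promote the isomorphism $\widehat{\HFK}(K') \cong \widehat{\HFK}(K)$ to an isotopy between $K'$ and $K$; granting this, $L$ is isotopic to $K_{n,nm}$, which is exactly the assertion that link Floer homology detects the cable.

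I expect this last promotion step to be the crux, since it is the only ingredient lying genuinely outside Theorem~\ref{HFLmmn}. Here it is convenient that the theorem already delivers $K'$ as an $L$-space knot, so full knot detection among all knots is not required: it suffices to know that $T(2,3)$ and $T(2,5)$ are each the unique $L$-space knot realizing their respective knot Floer homology. As the knot Floer homology of an $L$-space knot is determined by its Alexander polynomial, this reduces to a low-genus uniqueness statement --- the trefoil being the unique genus-one $L$-space knot --- that I would either cite from the established $\widehat{\HFK}$-detection results for these torus knots or extract directly from the staircase description of the knot Floer homology of $L$-space knots. The remaining bookkeeping, matching each $m$-range to the corresponding instance of $m > 2g(K)-1$, is routine, though I would take particular care to pin down the precise threshold in the $T(2,5)$ case.
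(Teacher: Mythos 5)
Your proposal is correct and follows essentially the same route as the paper: apply Theorem~\ref{HFLmmn} to conclude $L$ is a cable of an $L$-space knot $K'$ with $\widehat{\HFK}(K')\cong\widehat{\HFK}(K)$, then invoke the knot Floer homology detection results for $T(2,3)$ (Ghiggini) and $T(2,5)$ (Farber--Reinoso--Wang) to conclude $K'$ is the torus knot itself. Your observation that one only needs uniqueness among $L$-space knots (rather than full detection) is a valid refinement, but the paper simply cites the full detection theorems, exactly as your primary plan does.
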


 The $n=2$ case of item 1 in Corollary~\ref{HFLdetects} was shown in~\cite[Theorem 5.1]{binns2022rank}. Corollary~\ref{HFLdetects} follows from the fact that $T(2,3)$ and $T(2,5)$ are $L$-space knots and knot Floer homology detects $T(2,3)$~\cite[Corollary 1.5]{ghiggini2008knot}, and $T(2,5)$~\cite[Theorem A]{farber2022fixed}, respectively. Indeed, since $K_{1,m}$ is isotopic to $K$, the $n=1$ case of this corollary is exactly the result that knot Floer homology detects the $T(2,3)$ and $T(2,5)$.

An almost identical approach, combined with the fact that knot Floer homology detects the unknot~\cite{ozsvath2004holomorphic}, yields the following:

\newtheorem*{Tnnmdetection}{Theorem~\ref{Tnnmdetection}}
\begin{Tnnmdetection}
Link Floer homology detects $T(n,nm)$ for all $0\neq n\in \Z,m\in\Z$.
\end{Tnnmdetection}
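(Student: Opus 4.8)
The plan is to apply the same strategy used for the main theorem, specializing to the case where the companion knot is the unknot. The key observation is that the torus link $T(n,nm)$ is precisely the $(n,nm)$-cable of the unknot $U$; that is, $T(n,nm) = U_{n,nm}$. Since the unknot is an $L$-space knot with $g(U) = 0$, the condition $m > 2g(U)-1 = -1$ is automatically satisfied for all $m \geq 1$ (and the $n=1$ case is just the unknot itself, handled separately or trivially). So the structural input from Theorem~\ref{HFLmmn} should apply essentially verbatim.

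First I would invoke Theorem~\ref{HFLmmn} with $K = U$. Given a link $L$ with $\widehat{\HFL}(L) \cong \widehat{\HFL}(T(n,nm)) = \widehat{\HFL}(U_{n,nm})$, the theorem concludes that $L$ is isotopic to $K'_{n,nm}$ for some $L$-space knot $K'$ with $\widehat{\HFK}(K') \cong \widehat{\HFK}(U)$. The second step is then to identify the companion $K'$. Here I would use the fact that knot Floer homology detects the unknot~\cite{ozsvath2004holomorphic}: since $\widehat{\HFK}(K') \cong \widehat{\HFK}(U)$, we must have $K' = U$. Therefore $L$ is isotopic to $U_{n,nm} = T(n,nm)$, which is exactly the desired detection statement.

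The one subtlety I would need to check is the hypothesis $m > 2g(K)-1$ in Theorem~\ref{HFLmmn}, which was stated for a \emph{non-trivial} $L$-space knot $K$. With $K = U$ the companion is trivial, so I should verify that the proof of Theorem~\ref{HFLmmn} either accommodates the trivial companion directly or that the relevant geometric input (the annulus in the cable exterior, the Thurston norm computation, and Juhasz's decomposition formula~\cite{juhasz2008floer}) goes through when the companion exterior is a solid torus. Since $g(U)=0$ makes the numerical bound vacuous, I expect the essential content—the presence of an essential annulus in the exterior of $T(n,nm)$ detected by the Thurston norm—to carry over with at most minor bookkeeping. This is the step I would expect to be the main obstacle: ensuring the cabling-detection machinery is not secretly using non-triviality of the companion in an essential way, and treating the degenerate small-$n$ or small-$m$ cases (such as $n=1$, where $T(1,m)$ is the unknot) separately if needed.

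Thus the proof is a short deduction: combine the characterization of links with the link Floer homology of a cable (Theorem~\ref{HFLmmn}) with unknot detection by knot Floer homology to pin down the companion as the unknot, yielding $L \simeq T(n,nm)$. The gain over Corollary~\ref{HFLdetects} is that no separate knot detection result is required for the companion, since unknot detection is already available in full generality, which is precisely why the statement holds for \emph{all} $n,m$ without a lower bound on $m$.
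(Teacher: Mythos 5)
Your overall strategy---specialize the cable characterization to the trivial companion and then finish with unknot detection~\cite{ozsvath2004holomorphic}---is exactly the paper's route: the paper proves unknot-companion analogues of its key lemmas (Lemmas~\ref{unknotcomp} and~\ref{unknotlspace}), observing that the sutured decomposition argument of Proposition~\ref{HFLLspace} goes through essentially verbatim when the companion is trivial, and then applies Lemma~\ref{alexanderlemma} together with unknot detection to pin down the companion. So for $m>0$ your plan is sound, and the subtlety you flag (whether non-triviality of the companion is used essentially) is resolved affirmatively by the paper.

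The genuine gap is the range of $m$. The theorem asserts detection for \emph{all} $n,m$, but your argument, as you yourself note, only applies when $m>2g(U)-1=-1$, i.e.\ $m\geq 0$, and it fails at $m=0$ as well. For $m<0$ you cannot invoke Theorem~\ref{HFLmmn} at all: its hypothesis fails, and for any knot the rank formula $\rank(\widehat{\HFK}(K_m))=m$ is vacuously false for negative $m$. The paper's fix is special to the unknot: $\rank(\widehat{\HFK}(U_m))=|m|$ holds for \emph{all} $m$, which is why Lemma~\ref{unknotcomp} computes $\rank(\widehat{\HFL}(T(n,nm),I_1))=2^{n-2}|m|$ and the decomposition argument runs for every $m\neq 0$, positive or negative. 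For $m=0$, the link $T(n,0)$ is the $n$-component unlink, and there the machinery breaks outright: the link has split unknotted components, so the first step of the proof of Proposition~\ref{HFLLspace} fails, and the rank in the relevant Alexander multi-grading is $0$ rather than $2^{n-2}m$; the paper instead cites unlink detection by link Floer homology for this case. Your hedge about ``small-$n$ or small-$m$ cases'' points at $n=1$ but misses that $m=0$ and $m<0$ are the cases that actually require separate input; without them the statement as claimed---for all $n,m$---is not proved.
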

This strengthens $T(n,n)$ detection, a result given in~\cite[Theorem 6.1]{binns2020knot}. Note again that the $n=1$ case of this theorem is exactly the result that knot Floer homology detects the unknot.

Using a variation of the proof of Theorem~\ref{HFLmmn} we obtain the following stronger statement for $(2,2n)$-cables, oriented as the boundary of annuli:

\newtheorem*{HFK22n}{Theorem~\ref{HFK22n}}
\begin{HFK22n}
Suppose $K$ is a non-trivial $L$-space knot, $n>2g(K)-1$ and $L$ is a link with $\widehat{\HFK}(L)\cong\widehat{\HFK}(K_{2,2n})$. Then there is an $L$-space knot $K'$ with $\widehat{\HFK}(K')\cong\widehat{\HFK}(K)$ such that $L$ is isotopic to $K'_{2,2n}$.
\end{HFK22n}

This result is stronger than Theorem~\ref{HFLmmn} because Link Floer homology carries less gradings information -- see Section~\ref{SFHbackground} for details. In particular, rather than detecting the Thurston norm of a link, knot Floer homology detects only the genus of the link~\cite[Theorem 1.1]{ni2006note}, \cite[Theorem 1.2]{ozsvath2004holomorphic}. Indeed, since the Knot Floer homology of a link with a given number of components can be recovered from Link Floer homology, Theorem~\ref{HFK22n} implies Theorem~\ref{HFLmmn} in the cases it applies. As with Theorem~\ref{HFLdetects}, Theorem~\ref{HFLmmn} has some detection results as immediate corollaries:

\newtheorem*{HFKdetects}{Corollary~\ref{HFKdetects}}
\begin{HFKdetects}
Knot Floer homology detects:\begin{enumerate}
    \item $T(2,3)_{2,2n}$ oriented as the boundary of an annulus for $n> 1$
    \item $T(2,5)_{2,2n}$ oriented as the boundary of an annulus for $n> 3$.
\end{enumerate}
\end{HFKdetects}
 These follow from Theorem~\ref{HFK22n} in the same way that Corollary~\ref{HFLdetects} follows from Proposition~\ref{HFLLspace}. This corollary can be viewed as an extension of $T(2,2n)=T(2,1)_{2,2n}$ detection, as shown in~\cite[Theorem 3.2]{binns2020knot}. We note that the proofs of Theorem~\ref{HFK22n}, Theorem~\ref{HFLmmn}, Theorem~\ref{Tnnmdetection} Corollary~\ref{HFKdetects} and Corollary~\ref{HFLdetects} only use the Alexander gradings and the reduction of the absolute Maslov grading mod two.

Similar techniques can be applied to understand the link Floer homologies of links with high Euler characteristic surfaces embedded in their exteriors. Let $U$ be the unknot, and $\sqcup$ denote the split sum operation.

\newtheorem*{HFLunknot}{Proposition~\ref{HFLunknot}}
\begin{HFLunknot}
 Suppose $L'$ is a link such that $\widehat{\HFL}(L')\cong\widehat{\HFL}(L\sqcup U)$. Then $L'$ is isotopic to $L''\sqcup U$ where $L''$ is a link such that $\widehat{\HFL}(L'')\cong\widehat{\HFL}(L)$.
\end{HFLunknot}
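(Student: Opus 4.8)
The plan is to reduce the statement to a splitting result for the sutured exterior, via the Künneth formula for $\widehat{\HFL}$ under split unions together with the two advertised tools: Thurston norm detection and Juhász's decomposition formula. Recall that $\widehat{\HFL}(L_1' \sqcup L_2') \cong \widehat{\HFL}(L_1') \otimes \widehat{\HFL}(L_2')$ and that $\widehat{\HFL}(U) \cong \F$ is supported in Maslov and Alexander grading $0$; consequently $\widehat{\HFL}(L \sqcup U) \cong \widehat{\HFL}(L)$ as a bigraded group, carrying one extra Alexander grading—the one recording the $U$ factor—whose support is concentrated in degree $0$. First I would use this to read off the coarse topology of $L_1$: the number of components of a link is the rank of the Alexander grading lattice of its link Floer homology, so $L_1$ has exactly one more component than $L$, and there is a distinguished component $U' \subset L_1$ whose associated Alexander grading has support only in degree $0$.

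Next I would feed this into the Thurston norm detection theorem of Ozsv\'ath--Szab\'o. Passing to the sutured exterior $(M,\gamma) = (S^3 \setminus N(L_1), \gamma)$, so that $\SFH(M,\gamma) \cong \widehat{\HFL}(L_1)$, the concentration of the homology in $U'$-degree $0$ forces the Thurston norm of the cohomology class dual to $U'$ to be minimal: a minimal-complexity surface $S \subset M$ with $\partial S$ a longitude of $U'$ satisfies $\chi_-(S) = 0$. The target of the argument is to upgrade this to the statement that $U'$ bounds an embedded disk in $S^3$ whose interior is disjoint from $L_2 := L_1 \setminus U'$; such a disk produces a sphere separating $U'$ from $L_2$, exhibiting $U'$ as a split unknot.

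To make this upgrade I would decompose $(M,\gamma)$ along the taut surface $S$ and invoke Juhász's surface decomposition formula: $\SFH$ of the decomposed sutured manifold is isomorphic to the summand of $\widehat{\HFL}(L_1)$ supported in the extremal $U'$-grading, which here is the entire group since the support is the single degree $0$. Tracking the decomposition, the fact that all of $\SFH(M,\gamma)$ survives should force $S$ to be a disk and the complementary piece to be a product sutured manifold, using that $\SFH$ detects product sutured manifolds. This identifies $U'$ as an unknot split from $L_2$; alternatively, once a splitting sphere is produced the (boundary) connect-sum behaviour of $\SFH$ isolates a rank-one knot Floer factor for the split-off component, and the fact that knot Floer homology detects the unknot pins that component down as $U$.

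Finally, with $L_1 \cong L_2 \sqcup U$ established, the Künneth formula gives $\widehat{\HFL}(L_1) \cong \widehat{\HFL}(L_2) \otimes \widehat{\HFL}(U) \cong \widehat{\HFL}(L_2)$, and comparing with the hypothesis $\widehat{\HFL}(L_1) \cong \widehat{\HFL}(L \sqcup U) \cong \widehat{\HFL}(L)$ yields $\widehat{\HFL}(L_2) \cong \widehat{\HFL}(L)$, as required. I expect the main obstacle to be exactly the geometric step of the third paragraph: converting the purely algebraic concentration of the Alexander grading into an honest splitting sphere, i.e. guaranteeing that the norm-minimizing surface can be taken to be a disk disjoint from the remaining components rather than, say, an annulus or a higher-genus surface capped off after decomposition. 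Controlling the decomposing surface and ruling out these alternatives—so that the sutured decomposition genuinely certifies reducibility of the exterior and unknottedness of $U'$—is where the real work lies.
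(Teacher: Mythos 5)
Your skeleton matches the paper's proof in outline (isolate the distinguished component with Alexander support concentrated in degree $0$, produce a small surface it bounds, conclude splitness, finish with the split-union formula), but the step you yourself flag as ``where the real work lies'' is genuinely missing, and the mechanism you propose for closing it does not work. First, knowing that a norm-minimizing surface $S$ for the class dual to $U'$ has $\chi_{-}(S)=0$ does not single out a disk disjoint from the rest of the link: a once-punctured disk (equivalently an annulus in the exterior) also has $\chi_{-}=0$, and this is exactly what happens when $U'$ is a meridian of another component --- the Hopf-link situation of Proposition~\ref{HFLhopf}, which has genuinely different link Floer homology and must therefore be ruled out by whatever argument you run. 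Worse, the Ozsv\'ath--Szab\'o norm-detection theorem carries meridian correction terms (equivalently, a no-split-unknotted-components hypothesis), and $L_1$ is precisely suspected of having such a component, so even extracting $\chi_{-}(S)=0$ requires care. Second, your proposed upgrade is incorrect as stated: after decomposing along $S$, the fact that all of $\SFH$ survives does not force the complementary piece to be a product sutured manifold --- product detection is a rank-\emph{one} statement, whereas here the surviving group is all of $\widehat{\HFL}(L_1)$, of rank $2\rank\widehat{\HFL}(L)$. Nothing in your argument controls the geometric intersection of the decomposing surface with the remaining components, and that is the crux of the whole proposition.

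The paper closes this gap with a single sharper tool, a formula of Juh\'asz relating the top Alexander grading of one component to both the genus \emph{and} the geometric intersection number of surfaces bounded by that component:
\[
2\max\{A_{U'}:\widehat{\HFL}(L_1,A_{U'})\neq 0\}=\max\{-2g(S)-n(S,L_1-U'):\partial S=U'\},
\]
where $n(\cdot,\cdot)$ denotes geometric intersection number. Since the maximal $A_{U'}$ grading is $0$, this produces a surface with $g(S)=0$ and $n(S,L_1-U')=0$ in one stroke: an embedded disk bounded by $U'$ and disjoint from the other components, hence a splitting sphere. That formula is the one missing idea in your write-up; without it (or a substitute argument controlling $n(S,L_1-U')$), the proof does not go through. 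Two smaller corrections: your K\"unneth formula for split unions omits a factor --- the correct statement is $\widehat{\HFL}(L\sqcup U)\cong\widehat{\HFL}(L)\otimes V$ with $V$ of rank $2$, so the rank doubles rather than stays the same --- although your final step survives this correction, since the $V$ factors on both sides cancel ($\widehat{\HFL}(L_2)\otimes V\cong\widehat{\HFL}(L)\otimes V$ still forces $\widehat{\HFL}(L_2)\cong\widehat{\HFL}(L)$ by comparing ranks in each grading), which is exactly how the paper's last displayed isomorphism is used.
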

The authors suppose that this result was already known, but were unable to find a reference in the literature.

Let $H_\pm$ denote the positive or negative Hopf link.
\newtheorem*{HFLhopf}{Proposition~\ref{HFLhopf}}
\begin{HFLhopf}

Suppose $L'$ is a link such that $\widehat{\HFL}(L')\cong\widehat{\HFL}(L\#H_\pm)$. Then $L'$ is isotopic to $L''\# H_\pm$ where $L''$ is a link such that $\widehat{\HFL}(L'')\cong\widehat{\HFL}(L)$.
\end{HFLhopf}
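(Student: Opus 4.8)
The plan is to follow the strategy of the split-unknot case (Proposition~\ref{HFLunknot}), replacing the split unknot by the ``free'' component of the Hopf summand and the splitting sphere by the once-punctured disk it bounds. Throughout, write $M(L')$ for the exterior $S^3\setminus N(L')$ with the sutured structure given by a pair of oppositely oriented meridians on each boundary torus, so that $SFH(M(L'))\cong\widehat{\HFL}(L')$ via Juhász's identification~\cite{juhasz2008floer}; the two model inputs are this identification together with the fact that link Floer homology detects the multivariable Thurston norm~\cite{ozsvath2008linkFloerThurstonnorm}.

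First I would extract the gross invariants forced by the hypothesis. Since $\widehat{\HFL}$ determines the number of components and the multivariable Thurston norm of the exterior, the link $L'$ has exactly one more component than $L$ and has the same Thurston-norm ball as the exterior of $L\# H_\pm$. In $L\# H_\pm$ the free Hopf component $C$ (the summand of $H_\pm$ not used in the connected sum) is an unknot bounding a disk meeting the remaining components transversally in a single point; the associated properly embedded annulus in the exterior, running from a longitude of $C$ to a meridian of the connect-sum component, has Euler characteristic $0$ and so determines a norm-zero class, while the linking number $\pm 1$ between these two components is recorded by the multivariable Alexander grading. I would use this data to single out a component $C'$ of $L'$ and a norm-minimising surface $S'$ in the matching homology class.

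The heart of the argument is to show that $S'$ is forced to be the once-punctured disk of a Hopf summand. The key step is to prove that the minimal-complexity representative is a disk $D'$ bounded by $C'$ meeting $L'\setminus C'$ transversally in exactly one point: the detected linking number $\pm1$ rules out the split-unknot configuration (in which $D'$ would be disjoint from the rest of $L'$), and tautness together with the vanishing of the detected norm rules out higher-genus or disconnected competitors. An innermost-disk isotopy then finishes the topology: pushing $C'$ along $D'$ towards its single puncture exhibits $C'$ as a small meridian of a strand of $L'':=L'\setminus C'$, whence $L'$ is isotopic to $L''\# H_\pm$, the sign being pinned down by the framing and orientation data above. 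I expect this middle step to be the main obstacle, since certifying that the norm-minimiser is genuinely the Hopf once-punctured disk — rather than some other norm-zero configuration — and fixing the sign $\pm$ is exactly where a careful taut-surface analysis and Juhász's decomposition formula in the (possibly disconnected, non-fibered) exterior of $L$ are required.

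Finally I would recover $\widehat{\HFL}(L'')\cong\widehat{\HFL}(L)$. The connected-sum Künneth formula for link Floer homology — equivalently Juhász's decomposition formula applied along the annulus obtained from $D'$ — gives $\widehat{\HFL}(L')\cong\widehat{\HFL}(L'')\otimes W$ and $\widehat{\HFL}(L\# H_\pm)\cong\widehat{\HFL}(L)\otimes W$ for a fixed finite-dimensional graded vector space $W$ recording the contribution of the Hopf summand. Since the two left-hand sides are isomorphic by hypothesis, cancelling the common free tensor factor $W$ grading-by-grading over the field $\mathbb{F}$ yields $\widehat{\HFL}(L'')\cong\widehat{\HFL}(L)$, completing the proof.
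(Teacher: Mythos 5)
Your proposal follows essentially the same route as the paper: use link Floer homology's detection of surface complexity to force the distinguished component of $L'$ to bound a once-punctured disk, conclude that $L'$ has the form $L''\#H_\pm$, and then identify $\widehat{\HFL}(L'')$ with $\widehat{\HFL}(L)$ via the annulus decomposition (equivalently, the K\"unneth factorization and cancellation you describe). The step you flag as the main obstacle is in fact immediate from the paper's stated technical tool, the identity
$2\max\{A_1:\widehat{\HFL}(L,A_1)\neq0\}=\max\{-2g(S)-n(S,L-L_1):\partial S=L_1\}$:
since the detected maximum is that of a once-punctured disk, some surface $S$ bounded by the distinguished component satisfies $-2g(S)-n(S,L'-L_1')=-1$, and non-negativity of the genus and of the geometric intersection number forces $g(S)=0$ and $n(S,L'-L_1')=1$, so no separate taut-surface analysis is needed.
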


This should be compared with a very similar result~\cite[Proposition 9.2]{binns2020knot}. Note however that there is no hypothesis on Alexander gradings in the above statement. The main novelty of Proposition~\ref{HFLunknot} and Proposition~\ref{HFLhopf} is the fact that their proofs uses Juh\'asz's sutured decomposition formula~\cite[Theorem 1.3]{juhasz2008floer}. We note that Propositions~\ref{HFLunknot} and~\ref{HFLhopf} can be used to produce infinite families of links which knot Floer Homology and Khovanov homology cannot distinguish but which link Floer homology can detect, as in~\cite[Theorem 9.4]{binns2020knot}. For example, these two propositions imply that link Floer homology detects all forests of unlinks -- that is links which arise from unlinks via iterated connect sums -- while the knot Floer homology and Khovanov homology are determined by the number of components and number of split components alone. 

Our final result is of a distinct flavour and uses different methods than those preceding. View $T(2,n)$ as the closure of a $2$-braid. Denote by $L_n'$ the link formed by taking $T(2,n)$ union its braid axis as shown in Figure~\ref{fig:Ln}, but with arbitrary orientation.
\newtheorem*{2braid}{Theorem~\ref{2braid}}
\begin{2braid}
Link Floer homology detects $L_n'$ for all $n\in\Z$.
\end{2braid}

This result can be thought of as the Floer homology analogue of the corresponding result that Annular Khovanov homology detects all $2$-braids -- which follow from computations of Grigsby-Licata-Wehrli~\cite[Proposition 15]{grigsby_annular_2018} and the fact that annular Khovanov homology detects braids and the braid index by results of Grigsby-Ni~\cite[Corollary 1.2]{grigsby_sutured_2014}. The $n=0$ case is a special case of a result of Baldwin-Grigsby~\cite[Theorem 3.1]{BG}, while the $n=1,2$ and $3$ cases are proven in~\cite{binns2020knot}.

The outline of the paper is as follows: in Section~\ref{SFHbackground} we briefly review sutured Floer homology focusing on the properties that will be of use to us in subsequent sections. In Section~\ref{HFLmmnsection}, \ref{HFK22nsection} and Section~\ref{HFLhopfunknotsection}, we prove Theorem~\ref{HFLmmn}, Theorem~\ref{HFK22n} and related results. We conclude by proving Theorem~\ref{2braid} in Section~\ref{2braidsection}.

	\subsection*{Acknowledgements}
The first author would like to thank his advisor John Baldwin for his ongoing support, as well as Gage Martin. He would also like to thank the audience from his talk at the GSTGC conference at Georgia Tech for prompting him to think harder about how Theorem~\ref{HFK22n} could be generalised, resulting in Theorem~\ref{HFLmmn}. We would also like to thank Tye Lidman, Jen Hom, Matt Hedden and Zhenkun Li for their feedback and questions on an earlier draft. We would also like to thank the referees for their detailed comments on the first version of this paper. The second author would like to acknowledge partial support of NSF grants DMS 2144363, DMS 2105525, and AMS-Simons travel grant.

\section{Preliminaries on Sutured, Link and Knot Floer Homology}\label{SFHbackground}

In this section we review aspects of sutured Floer homology which will be relevant in later sections. We refer the reader to~\cite{juhasz2006holomorphic} and~\cite{juhasz2008floer} for details.

A \emph{balanced sutured manifold} is a pair $(Y,\gamma)$ where  $Y$ is a $3$-manifold such that each component of $Y$ has non-empty boundary and each component of $\partial Y$ contains a component of the \emph{suture}, $\gamma$. Here a suture is a (perhaps disconnected) embedded 1-manifold in $\partial Y$ which separates $\partial Y$ into two pieces of equal Euler characteristic. $Y$ is also required to satisfy some orientability conditions, see~\cite[Definition 2.1]{juhasz2006holomorphic} for details. This definition is due to Juh\'asz~\cite[Definition 2.2]{juhasz2006holomorphic}. A more general version of sutured manifolds were first studied by Gabai in the context of taut foliations~\cite{gabai1983foliations}.

Sutured Floer homology is an invariant of sutured manifolds defined by Juh\'asz~\cite{juhasz2006holomorphic}. Sutured Floer homology decomposes as a direct sum along relative $\spin^c$ structures. It behaves nicely under \textit{sutured manifold decompositions}, see~\cite{juhasz2008floer} for details. While sutured Floer homology and the other Floer theoretic invariants we study in this paper can be defined with coefficients in $\Z$, in this paper we take all coefficients to be in the field of two elements, $\F$, for simplicity. We note also that the pairing theorem for immersed curves \cite[Theorem 1.2]{hanselman2023bordered}, which we shall use in a couple of places, has only been proven with coefficients in $\F$.

We will be interested in a number of special cases of sutured Floer homology. The first of these is \emph{Link Floer homology}, which was originally due to Ozsv\'ath-Szab\'o~\cite{HolomorphicdiskslinkinvariantsandthemultivariableAlexanderpolynomial}. Link Floer homology can be thought of as the sutured Floer homology of a link exterior with pairs of meridional sutures. We call the $\spin^c$ grading on link Floer homology the \emph{Alexander grading}. If $L$ is an $n$-component link the Alexander grading can be thought of as an affine lattice over $\Z^n$ or indeed over $H_1(X(L))$. Link Floer homology detects the Thurston norm of a link exterior, see~\cite[Theorem 1.1]{ozsvath2008linkFloerThurstonnorm} for a precise statement.

The second special case of sutured Floer homology that we will be interested in is \emph{knot Floer homology}, which is due independently to Ozsv\'ath-Szab\'o~\cite{ozsvath2004holomorphic} and J. Rasmussen~\cite{Rasmussen}. Knot Floer homology can be thought of as the sutured Floer homology of a knot exterior with a pair of parallel oppositely oriented meridional sutures. We again call the $\spin^c$ grading on knot Floer homology the \emph{Alexander grading}, and can think of it as taking values in $\Z$. We note that knot Floer homology can be extended to an invariant of links by the process of \emph{knotification}. This process takes an $n$ component link in $Y$ and yields a knot in $Y\#^{n-1}(S^1\times S^2)$ ~\cite[Subsection 2.1]{Holomorphicdisksandknotinvariants}. An alternate description of Knot Floer homology is discussed in Remark~\ref{rem:HFKalt}. Important properties of knot Floer homology include that it detects the maximal Euler characteristic of a Seifert surface for a link~\cite[Theorem 1.1]{ni2006note},\cite[Theorem 1.2]{ozsvath2004holomorphic} and that it categorifies the Alexander polynomial~\cite[Equation 1]{Holomorphicdisksandknotinvariants}. It follows that link Floer homology detects the linking number of two component links by a result of Hoste~\cite[Theorem 1]{hoste1985firstcoefficientoftheconwaypolynomial}. We will indeed use a stronger version of Hoste's result in Section~\ref{HFLmmnsection} and Section~\ref{2braidsection}.

Let $K_n$ denote the core of $n$-surgery on a knot $K$. An \emph{$L$-space knot} is a knot $K$ for which $\rank(\widehat{\HFK}(K_n))=n$ for some $n\geq 0$. Note that this is a non-standard definition of an $L$-space knot, but follows quickly from the immersed curve interpretation for the surgery formula in bordered Floer homology. More specifically, the immersed curve interpretation of bordered Floer homology allows one to compute $\rank(\widehat{\HFK}(K_n))$ by counting the intersections of two multi-curve in a torus: one of slope $n$ -- under an appropriate parametrization -- and a multi-curve determined by $K$~\cite[Remark 52]{hanselman2018heegaard}. In the case that $K$ is an $L$-space knot the multi-curve associated to $K$ has a particularly simple form, see~\cite[Section 7.5]{hanselman2023bordered} for details. Alternatively this follows from work of Hedden~\cite[Theorem 4.1]{hedden2007knot}, \cite[Theorem 1.4]{hedden2011floer}, or Eftekhary~\cite[Theorem 1.2]{eftekhary2011knots}. The unknot, $U$, is an example of an $L$-space knot. Note that $\rank(\widehat{\HFK}(U_n))=|n|$ for all $n$. If $K$ is a non-trivial $L$-space knot then $\rank(\widehat{\HFK}(K_n))=n$ if and only if $n> 2g(K)-1$, again see~\cite[Remark 52]{hanselman2018heegaard} for details. $L$-space knots have many strong properties, including that their knot Floer homologies are determined by their Alexander polynomials.

\section{Link Floer homology and $(n,nm)$-cables}\label{HFLmmnsection}

In this section we prove the main theorem of this paper. Throughout this section we give $K_{n,nm}$ the orientation induced by an orientation on $K$.

\begin{theorem}\label{HFLmmn}
Suppose $K$ is a non-trivial $L$-space knot, $m> 2g(K)-1$ and $L$ is a link with $\widehat{\HFL}(L)\cong\widehat{\HFL}(K_{n,mn})$. Then $L$ is isotopic to $K'_{n,nm}$ for some $L$-space knot $K'$ such that $\widehat{\HFK}(K')\cong\widehat{\HFK}(K)$.
\end{theorem}

The above result is also true in the case that $K$ is the unknot:

\begin{theorem}\label{Tnnmdetection}
Link Floer homology detects $T(n,nm)$ for all $n,m$.
\end{theorem}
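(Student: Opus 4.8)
The plan is to deduce the statement from the unknot case of Theorem~\ref{HFLmmn} together with the fact that knot Floer homology detects the unknot. The starting point is the observation that the torus link $T(n,nm)$ is exactly the $(n,nm)$-cable $U_{n,nm}$ of the unknot $U$. As noted in Section~\ref{SFHbackground}, $U$ is an $L$-space knot with $g(U)=0$, so the genus hypothesis $m>2g(U)-1=-1$ of the cabling theorem holds for every $m\geq 0$; thus the unknotted analog of Theorem~\ref{HFLmmn} applies to the companion $U$.

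First I would apply that unknot analog to an arbitrary link $L$ with $\widehat{\HFL}(L)\cong\widehat{\HFL}(T(n,nm))=\widehat{\HFL}(U_{n,nm})$. This produces an $L$-space knot $K'$ satisfying $\widehat{\HFK}(K')\cong\widehat{\HFK}(U)$ and such that $L$ is isotopic to $K'_{n,nm}$. Next I would invoke the theorem of Ozsv\'ath--Szab\'o that knot Floer homology detects the unknot~\cite{ozsvath2004holomorphic}: since $\widehat{\HFK}(K')\cong\widehat{\HFK}(U)$, the knot $K'$ must be the unknot. Substituting back gives $L\cong U_{n,nm}=T(n,nm)$, which is precisely the assertion that link Floer homology detects $T(n,nm)$.

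The main obstacle is not this final deduction, which is essentially formal, but rather securing the unknot case of Theorem~\ref{HFLmmn}. Its proof presumably uses the nontriviality of the companion in an essential way --- for instance to compute the Thurston norm of the cable exterior via~\cite{ozsvath2008linkFloerThurstonnorm} and to identify the pieces produced by Juh\'asz's sutured decomposition~\cite{juhasz2008floer}. When the companion is the unknot, the cabling solid torus and its exterior degenerate to especially simple pieces, and I would need to verify that the norm computation and the decomposition argument still recover both the cabling annulus and the companion's knot Floer homology in this limiting case.

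Finally, I would treat the genuinely degenerate parameters hidden in ``for all $n,m$'' separately: when $n=1$ the link $T(1,m)$ is the unknot and the statement collapses to unknot detection, while when $m=0$ the link $T(n,0)$ is the $n$-component unlink, for which one should confirm that the unknot case of Theorem~\ref{HFLmmn} (or a direct sutured argument) still yields the conclusion. I expect the combination of these boundary cases with the degeneration of the cabling geometry to be where any real difficulty lies.
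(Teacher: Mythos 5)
Your proposal is correct and follows essentially the same route as the paper: the paper proves an unknot analogue of the cabling result (its Lemmas~\ref{unknotcomp} and~\ref{unknotlspace}, proven exactly as in the non-trivial companion case when $m\neq 0$, with $T(n,0)$ handled separately as the unlink), then uses the Alexander polynomial argument of Lemma~\ref{alexanderlemma} to conclude $\widehat{\HFK}(K')\cong\widehat{\HFK}(U)$, and finishes with unknot detection~\cite{ozsvath2004holomorphic}. The degenerate cases you flag are precisely the ones the paper isolates, so your plan matches the published proof step for step.
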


We have separated these two results purely for expository purposes. Taking $K$ to be the trefoil or cinquefoil and combining Theorem~\ref{HFLmmn} with the corresponding detection results for knot Floer homology due to Ghiggini~\cite[Corollary 1.5]{ghiggini2008knot} and Farber-Reinoso-Wang~\cite[Theorem A]{farber2022fixed} implies the following:

\begin{corollary}\label{HFLdetects}
Link Floer homology detects:\begin{enumerate}
    \item $T(2,3)_{n,mn}$ for $m> 1$.
    \item $T(2,5)_{n,mn}$ for $m> 3$.
\end{enumerate}
\end{corollary}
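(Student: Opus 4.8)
The plan is to obtain the corollary as a formal consequence of Theorem~\ref{HFLmmn} combined with the existing knot Floer detection results for the trefoil and the cinquefoil; the corollary introduces no new geometry of its own. In case (1) I would set $K = T(2,3)$ and in case (2) set $K = T(2,5)$. Both are non-trivial positive torus knots, hence non-trivial $L$-space knots, so the hypotheses of Theorem~\ref{HFLmmn} become available as soon as the stated lower bound on $m$ secures the genus condition $m > 2g(K)-1$.

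Now suppose $L$ is a link with $\widehat{\HFL}(L) \cong \widehat{\HFL}(K_{n,mn})$. Applying Theorem~\ref{HFLmmn} to this data yields an $L$-space knot $K'$ satisfying $\widehat{\HFK}(K') \cong \widehat{\HFK}(K)$ together with an isotopy $L \simeq K'_{n,mn}$. The effect of this step is to trade a hypothesis about the link Floer homology of $L$ for a hypothesis about the knot Floer homology of the companion $K'$, reducing the problem from links to knots.

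Finally I would invoke the relevant single-knot detection theorem. In case (1) Ghiggini's result~\cite{ghiggini2008knot} that knot Floer homology detects $T(2,3)$ forces $K' \simeq T(2,3)$; in case (2) the theorem of Farber-Reinoso-Wang~\cite{farber2022fixed} that knot Floer homology detects $T(2,5)$ forces $K' \simeq T(2,5)$. In either case $L \simeq K_{n,mn}$, which is exactly the asserted detection. The only point requiring care, as opposed to any serious difficulty, is bookkeeping: confirming that the ranges of $m$ in the statement are consistent with the genus hypothesis of Theorem~\ref{HFLmmn}, and that the orientation convention placed on $K_{n,mn}$ in this section agrees with the one intended in the corollary, so that the concluding isotopy is genuinely an isotopy of oriented links. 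All of the substantive content is imported through Theorem~\ref{HFLmmn} and the two cited knot detection theorems.
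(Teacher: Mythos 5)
Your proposal is correct and matches the paper's own proof essentially verbatim: both take $K=T(2,3)$ or $T(2,5)$, note these are $L$-space knots, apply Theorem~\ref{HFLmmn} to conclude $L$ is isotopic to $K'_{n,mn}$ with $\widehat{\HFK}(K')\cong\widehat{\HFK}(K)$, and then invoke the detection theorems of Ghiggini~\cite{ghiggini2008knot} and Farber--Reinoso--Wang~\cite{farber2022fixed} to identify $K'$. The one bookkeeping point you rightly flag but do not carry out is genuinely delicate here: since $g(T(2,5))=2$, the corollary's range $m>2$ corresponds to the threshold $m\geq 2g(K)-1$ rather than the strict inequality $m>2g(K)-1$ appearing in the statement of Theorem~\ref{HFLmmn}, an off-by-one discrepancy internal to the paper (the correct $L$-space surgery threshold is $m\geq 2g(K)-1$) rather than a flaw in your argument.
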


The outline of the proof of Theorem~\ref{HFLmmn} is as follows. Using the fact that link Floer homology detects the Thurston polytope~\cite[Theorem 1.1]{ozsvath2008linkFloerThurstonnorm}, we obtain a family of annuli embedded in $X(L)$ along which we perform a sutured decomposition. Understanding this manifold allows us to use Juh\'asz's result~\cite[Theorem 1.3]{juhasz2008floer} to deduce information about $X(L)$.

We begin by proving the link Floer homology of $(n,nm)$-cables of $L$-space knots have certain properties. Set $\widehat{\HFL}(L,I_k):=\underset{x\in I_k}{\bigoplus}\widehat{\HFL}(L,x)$ where $$I_k=\{(A_1,A_2,\dots ,A_n):k(n-1)=(n-1)A_n-\underset{1\leq i\leq n-1}{\sum}A_i)\}.$$

\begin{proposition}\label{HFLdecompositionrank}
Let $K$ be a non-trivial $L$-space knot with $m> 2g(K)-1$. Then: $${\rank(\widehat{\HFL}(K_{n,nm},I_{1}))=2^{n-2}m}.$$
\end{proposition}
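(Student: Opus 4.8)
The plan is to exploit the fact that, since $K$ is a non-trivial $L$-space knot and $m>2g(K)-1$, the cable $K_{n,nm}$ is an \emph{$L$-space link}: its large surgeries are $L$-spaces, so its link Floer homology is as simple as the Thurston polytope permits and is largely governed by the multivariable Alexander polynomial together with the grading data recorded by $\widehat{\HFK}(K)$. Concretely, I would first describe $K_{n,nm}$ geometrically as $n$ parallel copies of $K$ pushed onto $\partial\nu(K)$ with pairwise linking number $m$, so that consecutive components cobound cabling annuli embedded in $X(K_{n,nm})$. Pairing $c_1$ of a relative $\spin^c$ structure against the relative homology classes of these annuli translates the linear condition defining $I_1$ into the statement that the $\spin^c$ structure sits in a distinguished slice near the top of the Thurston polytope in the direction singled out by the last component; this is what makes $I_1$ the relevant slice to analyse.

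Next I would compute the rank in this slice. Using a cabling/satellite formula for link Floer homology (equivalently, the large-surgery model for the cable), the groups $\widehat{\HFL}(K_{n,nm},x)$ are expressed in terms of $\widehat{\HFK}(K)$, which for an $L$-space knot has rank $\le 1$ in each Alexander grading and is completely determined by $\Delta_K$. Because $m>2g(K)-1$, the framing is large enough that the contribution of the companion's genus does not interfere with the slice $I_1$: the count of surviving generators decouples into a contribution of $m$ from the longitudinal winding (exactly as in the unknot case $T(n,nm)$, where it is an $m$-fold lattice-point count in the relevant window) and a factor of $2$ for each of the $n-2$ extra parallel strands, each of which contributes a two-dimensional summand to the slice in the manner of an additional Hopf-type linking. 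Multiplying gives $2^{n-2}m$. I would organise this as an induction on $n$, with base case $n=2$ (where $I_1=\{A_2-A_1=1\}$ and a direct computation of the two-cable gives rank $m$) and an inductive step in which adding one parallel copy of $K$ doubles the rank in the appropriate slice.

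The main obstacle will be the grading bookkeeping: the hyperplane defining $I_1$ depends on $n$, so in the inductive step I must control precisely how the slice for the $n$-component cable restricts to the $(n-1)$-component cable, and verify that the extra strand contributes exactly a factor of two rather than leaking generators into neighbouring slices. Establishing the $L$-space link property of $K_{n,nm}$ in the multi-component setting, and confirming that the large-framing hypothesis $m>2g(K)-1$ is exactly what prevents the companion's Alexander polynomial from contributing additional terms to $I_1$, is the other delicate point; once these are in place the final rank is a clean lattice-point count.
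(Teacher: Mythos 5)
Your high-level strategy---treat $K_{n,nm}$ as an $L$-space link and extract the rank of the $I_1$ slice from Alexander-polynomial data---is genuinely different from what the paper does, and it is in principle viable: it is exactly the alternative the paper flags when it says the proposition ``can be deduced from a complete computation of the link Floer homology due to Gorsky-Hom.'' The paper instead gives a one-step argument with no induction and no $L$-space link machinery: decompose $X(K_{n,nm})$ along the union $\Sigma$ of the $n-1$ cabling annuli; the resulting sutured manifold is $X(K)$ with $n-1$ parallel pairs of sutures of slope $m[\mu]+[\lambda]$, whose sutured Floer homology is $\widehat{\HFK}(K_m)\otimes V^{n-2}$ (with $V$ of rank two), hence of rank $2^{n-2}m$ since $K$ is an $L$-space knot and $m>2g(K)-1$; Juh\'asz's decomposition theorem, together with the computation $\chi(\Sigma)=0$, $r(A_i,t_0)=0$, $I(\Sigma)=0$ forcing $k=1$, identifies this with $\rank(\widehat{\HFL}(K_{n,nm},I_1))$. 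Your reading of $I_1$ as the slice of $\spin^c$ structures pairing extremally with the annuli is consistent with this.

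However, as written your proposal has genuine gaps in execution. First, you invoke ``a cabling/satellite formula for link Floer homology'' expressing the multi-graded groups $\widehat{\HFL}(K_{n,nm},x)$ in terms of $\widehat{\HFK}(K)$; no such off-the-shelf formula exists in the hat, multi-graded setting. What exists is the $h$-function machinery of Gorsky-N\'emethi (the $h$-function of an $L$-space link is determined by the Alexander polynomials of its sublinks, and $\widehat{\HFL}$ is recovered by an inclusion-exclusion over lattice cubes), and Gorsky-Hom's computation is precisely the nontrivial implementation of this for cables---so citing it honestly would make your argument a corollary of their computation rather than an independent proof. Second, the inductive step is asserted with no mechanism: the only general tool relating $\widehat{\HFL}(L)$ to $\widehat{\HFL}(L\setminus L_n)\otimes V$ is a spectral sequence that collapses all the $A_n$ gradings, so it does not track the slice $I_1$ (whose defining hyperplane changes with $n$), and nothing you state shows the extra strand contributes exactly a factor of $2$ to that slice rather than ``leaking'' rank into neighbouring slices---you name this risk yourself but offer nothing to rule it out. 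Note also that ranks of an $L$-space link's $\widehat{\HFL}$ in individual multi-gradings are not simply coefficients of the multivariable Alexander polynomial (the inclusion-exclusion can produce rank where the Euler characteristic cancels), so ``governed by the Alexander polynomial'' cannot be used as a counting principle without the full $h$-function argument. The paper's sutured decomposition supplies in one stroke exactly what your induction is missing: the factor $\widehat{\HFK}(K_m)$ gives the $m$, and the $V^{n-2}$ from the extra parallel sutures gives the $2^{n-2}$.
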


We note that this proposition can be deduced from a complete computation of the link Floer homology due to Gorsky-Hom~\cite[Theorem 3]{gorsky2017cable}. However, we give a proof using Juh\'asz's sutured decomposition formula~\cite[Theorem 1.3]{juhasz2008floer}, which fits more naturally with the perspective we take in this paper.

We also fix some notation. Let $L$ be a link with $n$ components $\{L_i\}_{1\leq i\leq n}$. $H_1(\partial X(L))$ is generated by the homology classes of curves $\{\mu_i,\lambda_i\}_{1\leq i\leq n}$ where $\mu_i$ and $\lambda_i$ are respectively meridians and longitudes of $L_i$.

\begin{proof}[Proof of Proposition~\ref{HFLdecompositionrank}]
Let $K$ be an $L$-space knot. In $X(K_{n,nm})$ there is a family of annuli, $\{A_i\}_{1\leq i\leq n-1}$, where $A_i$ has boundary representing $-m[\mu_i]-[\lambda_i]+m[\mu_n]+[\lambda_n]$ in $H_1(\partial X(K_{n,nm}))$, for $1\leq i\leq n-1$. Each annulus $A_i$ can be realized by taking the trace of an isotopy from the $i\neq n$th component and the $n$th component of $K_{n,nm}$. Decomposing $X(K_{n,nm})$ along $\Sigma=\underset{1\leq i\leq n-1}{\bigcup} A_i$ yields the manifold $X(K)$ with $n-1$ parallel pairs of sutures of slope $m[\mu]+[\lambda]$, where $\mu$ and $\lambda$ are the meridian and longitude of $K$ respectively. Call this sutured manifold $(Y,\gamma)$. Since $(Y,\gamma)$ is obtained by adding $n-2$ pairs of parallel sutures to the exterior of $K$ in $S^3$ with a pair of parallel sutures of slope $n$, it follows from~\cite[Proposition 9.2]{juhasz2010sutured} that $\SFH(Y,\gamma)$ is given by $\widehat{\HFK}(K_m)\otimes V^{\otimes(n-2)}$, where $V$ is a rank $2$ vector space. Thus $\SFH(Y,\gamma)$ is of rank $2^{n-2} \cdot m$. Juh\'asz's theorem~\cite[Theorem 1.3]{juhasz2008floer} then implies that $\rank(\widehat{\HFL}(K_{n,nm},I_{1}))=2^{n-2}m$. Here the fact that $k=1$ follows from the fact that $\chi(\Sigma)=0$, $r(A_i,t_0)=0$ for all $i\in\{1,2,\dots, n\}$ and $I(\Sigma)=0$, following Juh\'asz's notation from~\cite[Lemma 3.9]{juhasz2008floer}.
\end{proof}

We can prove a stronger version of Proposition~\ref{HFLdecompositionrank} for the unknot, $U$.
\begin{lemma}\label{unknotcomp}
${\rank(\widehat{\HFL}(T(n,nm),I_{1}))=2^{n-2}|m|}$ for all $m\neq 0$.
\end{lemma}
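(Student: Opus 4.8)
The plan is to run the argument of Proposition~\ref{HFLdecompositionrank} essentially verbatim, replacing the non-trivial $L$-space knot $K$ by the unknot $U$ and tracking the single place where the two cases diverge. Since $T(n,nm)=U_{n,nm}$, the exterior $X(T(n,nm))$ again contains the family of annuli $\{A_i\}_{1\leq i\leq n-1}$ with $\partial A_i$ representing $-m[\mu_i]-[\lambda_i]+m[\mu_n]+[\lambda_n]$ in $H_1(\partial X(T(n,nm)))$. Decomposing along $\Sigma=\bigcup_{1\leq i\leq n-1}A_i$ produces the unknot exterior $X(U)$ equipped with $n-1$ parallel pairs of sutures of slope $m[\mu]+[\lambda]$, exactly as in the previous proof.

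First I would identify the sutured Floer homology of this decomposed manifold as $\widehat{\HFK}(U_m)\otimes V^{n-2}$, where $V$ has rank $2$, using the same identification of the knot exterior carrying one pair of sutures of slope $m[\mu]+[\lambda]$ with the knot Floer homology of the surgery core $U_m$, each additional pair of sutures contributing a tensor factor of $V$. The one substantive change occurs here: whereas for a non-trivial $L$-space knot one invokes $\rank(\widehat{\HFK}(K_m))=m$, which forces the hypothesis $m>2g(K)-1$, for the unknot I would instead use the fact recorded in Section~\ref{SFHbackground} that $\rank(\widehat{\HFK}(U_m))=|m|$ for \emph{all} $m$. This is precisely why the statement carries an absolute value and requires no genus hypothesis.

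Combining these, the decomposed manifold has sutured Floer homology of rank $|m|\cdot 2^{n-2}$, and Juhasz's decomposition formula~\cite[Theorem 1.3]{juhasz2008floer} then yields $\rank(\widehat{\HFL}(T(n,nm),I_{1}))=2^{n-2}|m|$. The verification that the decomposition lands in the grading set $I_1$ is identical to the previous proof, following from $\chi(\Sigma)=0$, $r(A_i,t_0)=0$ for all $i$, and $I(\Sigma)=0$ in Juhasz's notation.

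I expect no genuine obstacle here, since the argument is a repetition of the preceding one; the only point requiring care is the bookkeeping of orientations when $m<0$, ensuring that the surgery core $U_m$ and the annuli $A_i$ are set up consistently so that the rank computation produces $|m|$ rather than a signed quantity. The mirror symmetry $T(n,nm)\mapsto T(n,-nm)$ is exactly what the absolute value encodes, and the degenerate case $m=0$ (where $T(n,0)$ is the $n$-component unlink) is consistent with the formula.
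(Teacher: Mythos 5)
For $m\neq 0$ your argument is precisely the paper's: its proof of this lemma consists of the single remark that one repeats the proof of Proposition~\ref{HFLdecompositionrank} verbatim, and your reading of the absolute value as accounting for $m<0$ (mirroring) is the intended one. The genuine gap is at $m=0$, which you fold into the general argument by invoking ``$\rank(\widehat{\HFK}(U_m))=|m|$ for \emph{all} $m$'' and then dismiss with ``consistent with the formula.'' The decomposition argument does \emph{not} run verbatim there, for two reasons. First, the input fact fails at $m=0$: $U_0$ is the core $S^1\times\{pt\}\subset S^1\times S^2$, whose exterior is a solid torus with two disk-bounding sutures. That sutured manifold is taut ($R_\pm$ are annuli, so $R(\gamma)$ has zero Thurston norm and is norm-minimizing), and a direct computation with a genus one doubly-pointed diagram gives $\SFH\cong\widehat{\HF}(S^1\times S^2)\cong\F^2$, not $0$. (The ``for all $n$'' in Section~\ref{SFHbackground} is only accurate for $n\neq 0$; the paper never uses the $n=0$ case.) Second, and independently, the identification of the outer $\spin^c$ structures with the grading set $I_1$ --- the ``$k=1$'' computation --- is specific to $m\neq 0$. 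At $m=0$ the decomposed manifold has sutured Floer homology of rank $2\cdot 2^{n-2}=2^{n-1}$, whereas $\rank(\widehat{\HFL}(T(n,0),I_1))=0$; so whatever set of $\spin^c$ structures Juhasz's theorem picks out for these (now compressible, boundary-parallel) annuli, it cannot be $I_1$. In other words, at $m=0$ your chain of equalities contains two false steps which cancel to produce the correct number $0$; the argument as written does not establish the lemma in that case.

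The repair is exactly what the paper does: treat $m=0$ as a separate, elementary case. $T(n,0)$ is the $n$-component unlink, whose link Floer homology is supported entirely in Alexander multi-grading $(0,\dots,0)$; since $(0,\dots,0)$ does not satisfy $(n-1)=(n-1)A_n-\sum_{1\leq i\leq n-1}A_i$, it does not lie in $I_1$, and hence $\rank(\widehat{\HFL}(T(n,0),I_1))=0=2^{n-2}\cdot|0|$ with no surface decomposition needed. With that case split off (and the $m\neq 0$ case argued as you do), the proof is complete.
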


\begin{proof}
If $m\neq 0$ then the proof follows exactly as in the proof of Lemma~\ref{HFLdecompositionrank}. $T(n,0)$ is the $n$-component unlink, for which it is readily computed that $\rank(\widehat{\HFL}(T(n,0),I_{1}))=0$.
\end{proof}

The bulk of the work in proving Theorem~\ref{HFLmmn} is contained in the proof of the following proposition.

\begin{proposition}~\label{HFLLspace}
Let $K$ be a non-trivial $L$-space knot and $m> 2g(K)-1$. Suppose that $L$ is a link such that $\widehat{\HFL}(L)\cong\widehat{\HFL}(K_{n,mn})$. Then $L$ is the $(n,mn)$-cable of an $L$-space knot $K'$.
\end{proposition}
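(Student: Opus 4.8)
The plan is to reverse the sutured decomposition of Proposition~\ref{HFLdecompositionrank}: starting from $L$, I would reconstruct a family of annuli playing the role of the $\{A_i\}$, decompose along them, recognise the resulting sutured manifold as a knot exterior, and read off that the knot is an $L$-space knot. The first task is to extract from $\widehat{\HFL}(L)$ the combinatorial data it shares with $\widehat{\HFL}(K_{n,mn})$. The number of components of a link and the pairwise linking numbers of its components are both determined by link Floer homology, the latter via the strengthened form of Hoste's theorem~\cite{hoste1985firstcoefficientoftheconwaypolynomial} referred to above; hence $L$ has $n$ components with all pairwise linking numbers equal to $m$. Since link Floer homology detects the Thurston norm~\cite{ozsvath2008linkFloerThurstonnorm}, the Thurston polytope of $X(L)$ coincides with that of $X(K_{n,mn})$.

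Next I would use this polytope to produce essential annuli in $X(L)$. The relevant classes in $H_2(X(L),\partial X(L))$ have Thurston norm zero and, using the linking-number data to pin down the boundary slopes, represent $-m[\mu_i]-[\lambda_i]+m[\mu_n]+[\lambda_n]$ for $1\le i\le n-1$. After checking that $X(L)$ is irreducible with incompressible boundary---so that a norm-minimising representative, once its closed components are deleted, consists of essential annuli---each class is realised by an embedded annulus $A_i'$. Writing $\Sigma'=\bigcup_{1\le i\le n-1}A_i'$ and decomposing $(X(L),\gamma)$ along $\Sigma'$ yields a sutured manifold $(M',\gamma')$ whose $n$ boundary tori have been merged into a single torus carrying $n-1$ parallel pairs of sutures. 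The decomposition is taut, so Juhasz's formula~\cite[Theorem 1.3]{juhasz2008floer} identifies $\SFH(M',\gamma')$ with the summand $\widehat{\HFL}(L,I_1)$, giving $\rank\SFH(M',\gamma')=2^{n-2}m$ by Proposition~\ref{HFLdecompositionrank}.

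The hard part will be the three-dimensional identification of $(M',\gamma')$ as the exterior of a knot $K'$ in $S^3$, equipped with $n-1$ parallel pairs of sutures of slope $m[\mu]+[\lambda]$. Concretely I would study the region $W$ that $\Sigma'$ cuts off from $X(L)$ and argue that $W$ is a cabling space, inside which the components of $L$ appear as $n$ parallel $(1,m)$-curves on its outer torus. Ruling out the a priori more complicated pieces compatible with the homological and Floer-theoretic constraints is the delicate point; it is precisely what shows simultaneously that $M'=X(L)\setminus W$ embeds in $S^3$ as a knot exterior and that re-gluing $W$ exhibits $L$ as the cable $K'_{n,mn}$.

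Granting this identification, the conclusion follows quickly. The $n-1$ suture pairs give $\SFH(M',\gamma')\cong\widehat{\HFK}(K'_m)\otimes V^{n-2}$ with $\rank V=2$, so the rank count of the second paragraph forces $\rank\widehat{\HFK}(K'_m)=m$. As $K$ is non-trivial we have $m>2g(K)-1\ge 1$, so in particular $m\ge 0$, and the definition of an $L$-space knot in Section~\ref{SFHbackground} then shows $K'$ is an $L$-space knot. Combined with the reconstruction $L=K'_{n,mn}$, this completes the proof.
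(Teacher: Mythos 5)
You have the right skeleton---Thurston-norm detection produces a norm-zero surface, and Juhász's decomposition formula plus the rank count of Proposition~\ref{HFLdecompositionrank} yields the $L$-space condition---but the step you yourself flag as ``the hard part'' is a genuine gap, and it is the mathematical heart of the proposition. You propose to cut $X(L)$ along the annuli first and then identify the complementary region $W$ and the cut-open sutured manifold $(M',\gamma')$ as a cabling space and a knot exterior by ``ruling out the a priori more complicated pieces compatible with the homological and Floer-theoretic constraints.'' No argument is given for this, and it is not clear one exists in that form: rank constraints on sutured Floer homology do not readily certify that an abstract sutured manifold is a knot exterior in $S^3$, nor that the complementary piece is a cabling space. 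The paper never faces this recognition problem, because it reads the cable structure off the annuli \emph{before} decomposing: it first rules out split unknotted components (so the norm-zero surface $\Sigma$ is a union of annuli), then studies which boundary tori the boundary curves of each annulus lie on, and---after a merging argument that arranges all curves of $\partial\Sigma$ on each $\partial\nu(L_i)$ to be coherently oriented---concludes that each annulus joins some $L_i$, $i<n$, to $L_n$ with boundary slope $p$, where $pq=m$. This directly exhibits every component of $L$ as a parallel copy of the component $K'=L_n$, i.e.\ exhibits $L$ as an $(n,pn)$-cable of $K'$; Hoste's theorem then forces $p=m$, and only afterwards is the decomposition performed, with the sole purpose of computing $\rank(\SFH)=2^{n-2}m$ and concluding that $K'$ is an $L$-space knot.

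A second, related flaw: you invoke Hoste's theorem at the outset to ``pin down the boundary slopes'' of your annuli $A_i'$ as $-m[\mu_i]-[\lambda_i]+m[\mu_n]+[\lambda_n]$. But the class of a norm-minimising representative only determines the \emph{total} boundary class in $H_1(\partial X(L))$, not how it distributes among the connected components of the surface, nor which pairs of tori individual annuli connect; a priori there could be $q$ annuli of slope $p$ meeting each torus, with $pq=m$. This is precisely why the paper must run the merging argument and the Thurston-polytope inspection, and why it can pin down $p=m$ (via linking numbers) only \emph{after} the cable structure is in hand. So the slope claim you make in your second paragraph presupposes part of what must be proved.
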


\begin{proof}
Suppose $\widehat{\HFL}(L)$ is as in the statement of the theorem. We first note that $L$ does not contain any split unknotted components, since for each $i$ there exists a generator of link Floer homology with $i$th Alexander grading nonzero. Since link Floer homology detects the Thurston norm, $L$ and $K_{n,mn}$ have equivalent Thurston norm. We thus have that there is an Euler characteristic $0$ surface, denoted by $\Sigma$, with: \begin{equation}
[\Sigma]=(n-1)\PD[\mu_n]-\underset{1\leq i\leq n-1}{\sum}\PD[\mu_i]\in H_2(X(L)),\partial (X(L)))\end{equation}\label{eq:homboundary} 

Here we view $[\mu_i]$ as generators of $H^1(X(L))$ corresponding to meridians of each component of $L$. Since $L$ does not have split unknot components, $\Sigma$ must be a collection of embedded annuli. Let $L_i$ denote the $i$th component of $L$.

We now seek to simplify $\Sigma$ while preserving $[\Sigma]\in H_2(X(L),\partial(X(L)))$ so that on each boundary component of $X(L)$, the components of $\partial \Sigma$ are coherently oriented. If $A_1, A_2$ are two connected components of $\Sigma$ with boundary components on $\partial(\nu(L_i))$ for some $i$ then $[\partial A_1|_{\partial\nu(L_i)}]=\pm[\partial A_2|_{\partial\nu(L_i)}]\in H_1(\partial\nu(L_i))$. Suppose there are annuli $A_1, A_2$ for which $[\partial A_1|_{\partial\nu(L_i)}]=-[\partial A_2|_{\partial\nu(L_i)}]\in H_1(\partial\nu(L_i))$. We may assume these are adjacent in the sense that $\partial\nu(L_i)-(\Sigma-A_1-A_2)$ contains a path from $A_1$ to $A_2$. We can form a surface $\Sigma'$ with one fewer component than $\Sigma$ representing $[\Sigma']=[\Sigma]\in H_2(X(L)),\partial (X(L)))$ by merging the two annuli near $\partial\nu(L_i)$. Iterating we see that without loss of generality we may take all components of $\partial \Sigma$ to carry the same orientation in each $\partial\nu(L_i)$. We relabel this surface as $\Sigma$.

Equation~\ref{eq:homboundary} implies that $\Sigma$ must have at least $n-1$ boundary components on $\partial(X(L_n))$ and at least one boundary component on each of $\partial(X( L_i))$ for $1\leq i\leq n-1$. Since the components of $\partial \Sigma$ are all oriented in the same direction on $\partial\nu(L_i)$ for each $i$, it follows that $\Sigma$ must indeed have exactly one boundary component on each of $\partial(X(L_i))$ for each $1\leq i\leq n-1$ and exactly $n-1$ boundary components on $\partial(X(L_n))$. It follows in turn that $\Sigma$ consists of $(n-1)$ annuli, $A_i, 1\leq i\leq n-1,$ where $A_i$ has a boundary component on $\partial(\nu(L_n))$ and another on $\partial(\nu(L_i))$ with $i\neq n$. It follows that $L$ is a $(n,pn)$-cable of $L_n$ for some $p$. In particular $\lk(L_i,L_j)=p$ for all $i\neq j$.

Now, by a result of Hoste~\cite[Theorem 1]{hoste1985firstcoefficientoftheconwaypolynomial}, if $L'$ is a link with components $L'_i$ then the Conway polynomial determines the co-factors of the matrix $M(L')$ with entries given by $l_{i,j}=\lk(L_i',L_j')$ for $i\neq j$, and $l_{i,i}=-\sum_{1\leq j\leq n, j\neq i}\lk(L_i',L_j')$. Consider the $(n,qn)$-cable of some knot $K'$. Observe that any co-factor of $M(K'_{n, qn})$ is given by $q^{n-1}\det(A)$ where $A$ is defined by $a_{i,j}=1$ for $i\neq j$, $a_{i,i}=1-n$. Note that $\det(A)\neq 0$. Since Link Floer homology determines the Conway polynomial, $K_{n,mn}$ and the $(n,pn)$-cable of $L_n$ have the same Conway polynomial. It follows in turn that $p=m$.

Let $K'=L_n$. Decomposing the exterior of the $(n,mn)$-cable of $K'$ along $\Sigma$, we find a sutured manifold with underlying smooth manifold given by $X(K')$. The sutures are given by $2(n-1)$ parallel copies of curves of slope $m$. By~\cite[Proposition 9.2]{juhasz2010sutured}, this has sutured Floer homology given by $\widehat{\HFL}(L,I_1)\cong\widehat{\HFK}(K')\otimes V^{\otimes(n-2)}$. It follows that $\widehat{\HFK}(K')\otimes(\F^{2})^{\otimes(n-2)}$ is of rank $2^{n-2}m$ whence $K'$ is an $L$-space knot.
\end{proof}

Proposition~\ref{HFLLspace} is also true in the case that $K$ is trivial:

\begin{lemma}\label{unknotlspace}
Suppose $\widehat{\HFL}(L)\cong\widehat{\HFL}(T(n,nm))$ for some $m\in \Z$. Then $L$ is the $(n,nm)$-cable of an $L$-space knot.
\end{lemma}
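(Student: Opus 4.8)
The plan is to follow the proof of Proposition~\ref{HFLLspace} essentially verbatim whenever $m\neq 0$, and to treat the degenerate case $m=0$ separately. For $m\neq 0$ the link $T(n,nm)$ has nonzero Thurston norm and nonzero pairwise linking numbers, so each ingredient of Proposition~\ref{HFLLspace} remains available: link Floer homology detects the Thurston polytope~\cite{ozsvath2008linkFloerThurstonnorm}, producing an Euler characteristic $0$ surface $\Sigma$ in $X(L)$ in the class $m(n-1)[\mu_n]-m\sum_{1\leq i\leq n-1}[\mu_i]$, and the observation that $L$ has no split unknotted components goes through because for $m\neq 0$ every component of $T(n,nm)$ links another component, so $\widehat{\HFL}(T(n,nm))$ has a generator in nonzero $i$th Alexander grading for each $i$.

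First I would rerun the merging argument to arrange that all boundary components of $\Sigma$ induce the same orientation on each $\partial\nu(L_i)$, and then apply Hoste's formula~\cite{hoste1985firstcoefficientoftheconwaypolynomial} exactly as in Proposition~\ref{HFLLspace} to pin down the linking numbers, forcing the boundary slope to be $m$ on the relevant cabling knot $K'$ and hence $L=K'_{n,nm}$. Decomposing $X(L)$ along $\Sigma$ yields $X(K')$ with $n-1$ parallel pairs of sutures of slope $m$, whose sutured Floer homology is $\widehat{\HFK}(K'_m)\otimes V^{n-2}$. Combining Juh\'asz's decomposition formula~\cite{juhasz2008floer} with the rank computation of Lemma~\ref{unknotcomp} — which here plays the role that Proposition~\ref{HFLdecompositionrank} played in the non-trivial case — gives $\rank(\widehat{\HFK}(K'_m))=|m|$, so that $K'$ is an $L$-space knot by the non-standard definition of Section~\ref{SFHbackground}.

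It then remains to treat $m=0$, where $T(n,0)$ is the $n$-component unlink, itself the $(n,0)$-cable of the unknot. I expect this to be the main obstacle, since the Thurston-norm machinery degenerates here: the unlink exterior has trivial Thurston norm, the surface $\Sigma$ collapses, and by Lemma~\ref{unknotcomp} the relevant rank vanishes, so the annulus-decomposition argument carries no information. In this case the statement reduces to showing that $\widehat{\HFL}(L)\cong\widehat{\HFL}(T(n,0))$ forces $L$ to be the unlink, which I would establish by invoking unlink detection for link Floer homology; the conclusion then follows upon noting that the unknot $U$ is an $L$-space knot with $\rank(\widehat{\HFK}(U_m))=|m|$.
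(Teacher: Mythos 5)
Your proposal is correct and matches the paper's proof essentially verbatim: for $m\neq 0$ the paper likewise reruns the argument of Proposition~\ref{HFLLspace} with Lemma~\ref{unknotcomp} substituted for Proposition~\ref{HFLdecompositionrank}, and for $m=0$ it likewise identifies $T(n,0)$ as the $n$-component unlink and invokes unlink detection by link Floer homology. The only difference is expository: you spell out the steps (merging, Hoste's formula, the sutured decomposition) that the paper leaves implicit in its one-line citation of the earlier proof.
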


\begin{proof}
If $m\neq0$ the result follows from Lemma~\ref{unknotcomp} just as Proposition~\ref{HFLLspace} follows from Proposition~\ref{HFLdecompositionrank}. $T(n,0)$ is the $n$ component unlink which Link Floer homology is known to detect.
\end{proof}

To conclude the proof of Theorem~\ref{HFLmmn} we will need the following lemma:
\begin{lemma}\label{alexanderlemma}
If $K$ and $K'$ are $L$-space knots with $\Delta_{K_{n,mn}}(t)=\Delta_{K'_{n,mn}}(t)$ for any $m\in \Z$ then $\widehat{\HFK}(K')\cong\widehat{\HFK}(K)$. 
\end{lemma}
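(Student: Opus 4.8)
The plan is to reduce this statement about knot Floer homology to one about Alexander polynomials, and then to extract $\Delta_K$ from $\Delta_{K_{n,mn}}$ via the cabling formula. The starting observation is the property recorded in Section~\ref{SFHbackground}: the knot Floer homology of an $L$-space knot is determined by its Alexander polynomial. Since $K$ and $K'$ are both $L$-space knots, it therefore suffices to prove that $\Delta_K(t)=\Delta_{K'}(t)$, for then $\widehat{\HFK}(K)\cong\widehat{\HFK}(K')$ follows at once.

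To recover $\Delta_K$ from the cable I would invoke the satellite formula for the single-variable Alexander polynomial. The $(n,mn)$-cable is the satellite with companion $K$ and pattern the torus link $T(n,mn)$ sitting in the solid torus with winding number $n$, the longitudinal wrapping number. The satellite formula then gives
\[
\Delta_{K_{n,mn}}(t)=\Delta_{T(n,mn)}(t)\cdot\Delta_K(t^{n}),
\]
where the first factor depends only on $n$ and $m$. Applying the same identity to $K'$ and using the hypothesis $\Delta_{K_{n,mn}}(t)=\Delta_{K'_{n,mn}}(t)$ yields
\[
\Delta_{T(n,mn)}(t)\cdot\Delta_K(t^{n})=\Delta_{T(n,mn)}(t)\cdot\Delta_{K'}(t^{n}).
\]

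The remainder is algebraic. Working in the integral domain of Laurent polynomials (in $t^{1/2}$, to accommodate the symmetrized normalization), I would cancel the common factor $\Delta_{T(n,mn)}(t)$; this requires knowing the factor is nonzero, which holds because torus links are fibered and hence have monic, and in particular nonzero, Alexander polynomial. Cancelling leaves $\Delta_K(t^{n})=\Delta_{K'}(t^{n})$, and since the substitution $t\mapsto t^{n}$ is injective on Laurent polynomials -- it sends distinct monomials to distinct monomials, so equality after substitution forces equality of coefficients -- this gives $\Delta_K(t)=\Delta_{K'}(t)$, completing the proof.

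The main obstacle I anticipate is pinning down the precise form of the cabling formula and justifying the cancellation. One must confirm that the pattern contribution is genuinely independent of the companion, so that it is literally the same factor for $K$ and for $K'$, and that it does not vanish; establishing nonvanishing of $\Delta_{T(n,mn)}$ is the one place where a genuine geometric input -- fiberedness of torus links, or a direct computation of their Alexander polynomials -- is needed. Everything else is formal.
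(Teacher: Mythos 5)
Your proposal is correct and follows essentially the same route as the paper: recover $\Delta_K$ from $\Delta_{K_{n,mn}}$ via the cabling/satellite formula for the Alexander polynomial, then invoke the fact that an $L$-space knot's knot Floer homology is determined by its Alexander polynomial. The paper's proof is just a terser version of this (it states the division by the torus-link factor without spelling out the cancellation or the injectivity of $t\mapsto t^n$, details you handle carefully).
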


\begin{proof}
Suppose $K$ and $K'$ are as in the statement of the lemma. Then $$\Delta_K(t)=\Delta_{K'}(t)=\dfrac{\Delta_{K_{n,mn}}(t)}{\Delta_{T(n,mn)}(t^n)}.$$ See~\cite[Theorem 6.15]{lickorish2012introduction}, where the statement is given in the case that $K_{n,mn}$ is a knot. The proof still holds in the link case. Since $K'$ and $K$ are $L$-space knots $\widehat{\HFK}(K')\cong\widehat{\HFK}(K)$.
\end{proof}

We can now conclude the proof of the main theorem.

\begin{proof}[Proof of Theorem~\ref{HFLmmn}]
Suppose $L,K$ are is in the statement of the theorem. Then $L$ is an $(n,mn)$-cable of an $L$-space knot $K'$ by Proposition~\ref{HFLLspace}, whence Lemma~\ref{alexanderlemma} implies that $\widehat{\HFK}(K')\cong\widehat{\HFK}(K)$, as required.
\end{proof}

$T(m,mn)$ detection also follows quickly:

\begin{proof}[Proof of Theorem~\ref{Tnnmdetection}]
Suppose $\widehat{\HFL}(L)\cong\widehat{\HFL}(T(m,mn))$. Then Lemma~\ref{unknotlspace} implies $L$ is the $(n,nm)$-cable of an $L$-space knot $K'$. Lemma~\ref{alexanderlemma} implies that $L$ has the same knot Floer homology as the unknot, whence $K'$ is the unknot by~\cite{ozsvath2004holomorphic}, as desired.
\end{proof}

We conclude by proving the remaining two families of detection results.

\begin{proof}[Proof of Corollary~\ref{HFLdetects}]

Suppose $L$ is a link with knot Floer homology of one of the two given types. Note that $T(2,3)$ and $T(2,5)$ are $L$-space knots. Thus Theorem~\ref{HFLmmn} implies $L$ is a $(n,nm)$-cable of a knot with the same knot Floer homology as $T(2,3)$, or $T(2,5)$ respectively. The result then follows from the fact that knot Floer homology detects both of these knots~\cite[Corollary 1.5]{ghiggini2008knot},~\cite[Theorem A]{farber2022fixed}.
\end{proof}

\section{Knot Floer homology and $(2,2n)$-cables of $L$-space knots}~\label{HFK22nsection} In this section we prove a stronger version of Theorem~\ref{HFLmmn} for $(2,2n)$-cables in the context of knot Floer homology. Throughout this section we orient $(2,2n)$-cables links in such a way that they bound an annulus.

\begin{theorem}\label{HFK22n}
Let $K$ be a non-trivial $L$-space knot with $n> 2g(K)-1$. Then  $\widehat{\HFK}(L)\cong\widehat{\HFK}(K_{2,2n})$ if and only if $L$ is the $(2,2n)$ cable of an $L$-space knot $K'$ with $\widehat{\HFK}(K')\cong\widehat{\HFK}(K)$.
\end{theorem}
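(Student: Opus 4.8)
The statement is an equivalence, and the reverse implication is the easier one: if $L=K'_{2,2n}$ for an $L$-space knot $K'$ with $\widehat{\HFK}(K')\cong\widehat{\HFK}(K)$, then $K$ and $K'$ are $L$-space knots with equal Alexander polynomials and equal genera, so since the knot Floer homology of a cable of an $L$-space knot is determined by the Alexander polynomial of its companion---for instance by the cabling computation of Gorsky--Hom~\cite{gorsky2017cable}, or by the sutured-decomposition method of Proposition~\ref{HFLdecompositionrank}---we would obtain $\widehat{\HFK}(K'_{2,2n})\cong\widehat{\HFK}(K_{2,2n})$. The content is therefore the forward implication, which I would prove by a variation of the argument for Proposition~\ref{HFLLspace}, run for knot rather than link Floer homology.

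First I would show that $L$ has exactly two components. Since knot Floer homology detects the graded Euler characteristic of a link~\cite{ni2006note,ozsvath2004holomorphic} and $K_{2,2n}$ has two components, the graded Euler characteristic of $\widehat{\HFK}(L)$ carries the factor recording a two-component link; together with the absence of split unknotted summands (visible from the Alexander support) this pins down the component count. I would then recover the framing data: knot Floer homology determines the single-variable Alexander polynomial, so by Hoste~\cite{hoste1985firstcoefficientoftheconwaypolynomial} the linking number of the two components is determined, forcing the cabling slope to be $n$.

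The crux is to produce the cabling annulus. In the link Floer setting of Proposition~\ref{HFLLspace} the annulus was read off directly from the Thurston polytope~\cite{ozsvath2008linkFloerThurstonnorm}; with only the single collapsed Alexander grading of knot Floer homology we no longer see the full polytope, and the annulus lies in a homology direction transverse to the Seifert class dual to that grading. This is exactly where the annulus-bounding orientation hypothesis enters, and why the argument is restricted to $(2,2n)$-cables: with $L$ oriented as the boundary of an annulus, the cabling annulus appears inside a norm-minimizing surface for the knotification, so the Thurston-norm information recorded by $\widehat{\HFK}(L)$ does detect an embedded Euler-characteristic-$0$ surface $\Sigma$ in $X(L)$ in the correct class. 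I would then argue, as in Proposition~\ref{HFLLspace}, that $\Sigma$ is a genuine annulus (ruling out split unknots and making boundary slopes coherent after merging components). Decomposing $X(L)$ along $\Sigma$ yields $X(K')$ with a pair of sutures of slope $n$, whose sutured Floer homology is $\widehat{\HFK}(K'_n)$; Juh\'asz's decomposition formula~\cite{juhasz2008floer} identifies this with the relevant graded summand of $\widehat{\HFK}(L)$, of rank $n$, so that $K'$ is an $L$-space knot by the surgery characterization~\cite{hanselman2018heegaard}. Matching Alexander polynomials as in Lemma~\ref{alexanderlemma} then gives $\widehat{\HFK}(K')\cong\widehat{\HFK}(K)$.

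I expect the main obstacle to be precisely the production of $\Sigma$: making rigorous that, under the annulus-bounding orientation, the knot Floer Thurston-norm data detects an annulus in the cabling class rather than merely some norm-minimizing surface, and that Juh\'asz's formula isolates a summand of the predicted rank $n$. Controlling which graded piece of $\widehat{\HFK}(L)$ records the sutured decomposition---the analogue of the region $I_1$ used in the link Floer argument, but now after collapsing the multi-grading---is the delicate step on which the whole proof turns.
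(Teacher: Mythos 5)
Your proposal follows essentially the same route as the paper's proof for the core of the argument: the top collapsed Alexander grading, via genus/Thurston-norm detection for knot Floer homology of links, produces a Seifert surface of Euler characteristic $0$; nonzero linking number rules out a disk-plus-punctured-torus configuration, so the surface is an annulus whose core is a knot $K'$ with $L=K'_{2,2n}$ (the slope being the linking number); decomposing $X(L)$ along the annulus gives $X(K')$ with sutures of slope $n$, and Juh\'asz's theorem identifies its sutured Floer homology $\widehat{\HFK}(K'_n)$ with the rank-$n$ top summand, so $K'$ is an $L$-space knot; Lemma~\ref{alexanderlemma} finishes. Your worry that this is the ``delicate step'' is misplaced, as is the remark that the annulus lies ``transverse to the Seifert class'': with the annulus-bounding orientation the cabling annulus \emph{is} a Seifert surface for $L$, so the collapsed grading records exactly the right class, and this is precisely how the paper argues. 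Your treatment of the converse (which the paper leaves implicit) is fine.

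The one place you genuinely depart from the paper is the component count, and there your argument as written has a gap. You assert that the graded Euler characteristic ``carries the factor recording a two-component link,'' but $\chi(\widehat{\HFK}(L))\doteq(t^{1/2}-t^{-1/2})^{\ell-1}\Delta_L(t)$, and from this product alone one cannot read off $\ell$: a priori a link with more components could have an Alexander polynomial absorbing the extra factors, and your appeal to the absence of split unknotted components does not supply the missing statement (split components make $\chi$ vanish identically, which is a different issue). The paper instead first computes that the maximal Maslov grading of $\widehat{\HFK}(K_{2,2n})$ is $\tfrac12$ (this is exactly why it proves condition 3 of its Proposition, via the skein exact triangle and immersed curves), then uses the spectral sequence from $\widehat{\HFK}(L)$ to $\widehat{\HF}(\#^{\ell-1}(S^1\times S^2))$, whose top Maslov grading is $\tfrac{\ell-1}{2}$, to force $\ell\le 2$, and parity of the rank to force $\ell=2$. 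Your Euler-characteristic route can be repaired: the Conway polynomial of an $\ell$-component link is divisible by $z^{\ell-1}$, so $\chi(\widehat{\HFK}(L))$ vanishes at $t=1$ to order at least $2(\ell-1)$, while $\chi(\widehat{\HFK}(K_{2,2n}))$ vanishes there to order exactly $2$ because its leading Conway coefficient is the linking number $n\neq 0$ (Hoste); this gives $\ell\le 2$, and $\ell=1$ is impossible since knots have $\Delta_K(1)=\pm1$. Once patched, your route is arguably cleaner than the paper's, since it bypasses the Maslov-grading computation entirely; but the divisibility argument is the missing idea, and without it the step does not go through.
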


Note that a stronger version of the above result is true for the unknot, namely that knot Floer homology detects $T(2,2n)$ for all $n$~\cite[Theorem 3.2]{binns2020knot}.

Again this yields some detection results as corollaries:

\begin{corollary}\label{HFKdetects}
Knot Floer homology detects:\begin{enumerate}
    \item $T(2,3)_{2,2n}$ oriented as the boundary of an annulus for $n> 1$
    \item $T(2,5)_{2,2n}$ oriented as the boundary of an annulus for $n> 3$
    \end{enumerate}
\end{corollary}

We begin by proving the knot Floer homology of $(2,2n)$-cables of $L$-space knots satisfy the following properties:

\begin{proposition}\label{prop:key}
Let $K$ be a non-trivial $L$-space knot, $n> 2g(K)-1$. Then $\widehat{\HFK}(K_{2,2n})$ satisfies the following: \begin{enumerate}
    \item $\max\{A:\widehat{\HFK}(K_{2,2n},A)\neq0\}=1$
    \item $\rank(\widehat{\HFK}(K_{2,2n},1))=n$.
\end{enumerate}

\end{proposition}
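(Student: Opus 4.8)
The plan is to establish all three statements by localizing everything in the top Alexander grading and exploiting the annulus that the $(2,2n)$-cable bounds. First I would record how the collapsed invariant $\widehat{\HFK}(K_{2,2n})$ relates to the multi-graded $\widehat{\HFL}(K_{2,2n})$: reversing the orientation of one component, which is exactly what the annular orientation amounts to, sends the bigrading $(A_1,A_2)$ to $(A_1,-A_2)$, so that the top collapsed Alexander grading of $\widehat{\HFK}$ corresponds to the extremal value of $A_1-A_2$, i.e. precisely the slice $I_1$ appearing in Proposition~\ref{HFLdecompositionrank}. This is the reason for orienting the cable to bound an annulus, and it is what ties all three statements to a single sutured computation.

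For statement (1) I would argue that the maximal Alexander grading is controlled by the Thurston norm, which link Floer homology detects~\cite{ozsvath2008linkFloerThurstonnorm}. Since $K$ is non-trivial, neither component of $K_{2,2n}$ bounds a disk and the cable has no split unknotted component, so the norm-minimizing surface in the relevant class is the annulus $\Sigma$ with $\chi(\Sigma)=0$. A standard genus computation then gives top Alexander grading $\tfrac{1}{2}(\ell-\chi(\Sigma))=\tfrac{1}{2}(2-0)=1$. I would double-check the normalization against the model case $K_{2,2n}=T(2,2)$, the Hopf link, whose collapsed knot Floer homology has top Alexander grading $1$.

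Statement (2) I would obtain as the $\ell=2$ specialization of the computation in Proposition~\ref{HFLdecompositionrank}. Decomposing $X(K_{2,2n})$ along $\Sigma$ produces $X(K)$ with a single pair of sutures of slope $n[\mu]+[\lambda]$, whose sutured Floer homology is $\widehat{\HFK}(K_n)$; since $K$ is an $L$-space knot and $n>2g(K)-1$ this has rank $n$ by~\cite{hanselman2018heegaard}. Juhász's decomposition formula~\cite{juhasz2008floer} identifies this group with the extremal Alexander-grading summand, which by the first paragraph is exactly $\widehat{\HFK}(K_{2,2n},1)$; hence its rank is $n$.

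The hard part will be statement (3), the Maslov grading, since Juhász's formula identifies the decomposed sutured group only up to an overall shift and does not transparently record the absolute Maslov grading of $\widehat{\HFK}(K_{2,2n})$. My plan is to pin the top generator down at Maslov grading $0$ in the knot normalization, using that for an $L$-space knot the generator in the maximal Alexander grading of $\widehat{\HFK}(K_n)$ sits in a definite Maslov grading, and then add the $\tfrac{\ell-1}{2}=\tfrac{1}{2}$ shift coming from the two-component link normalization to obtain the value $\tfrac{1}{2}$. I would cross-check this value, and the absence of any higher Maslov grading elsewhere, against the explicit cable computation of Gorsky--Hom~\cite{gorsky2017cable}, which determines $\widehat{\HFK}(K_{2,2n})$ completely; reconciling their grading conventions with the sutured picture is the main technical point to get right.
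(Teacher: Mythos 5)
Your arguments for conditions (1) and (2) coincide with the paper's: the paper likewise gets (1) from the fact that the maximal Euler characteristic surface bounded by $K_{2,2n}$ is an annulus, and (2) by decomposing $X(K_{2,2n})$ along that annulus to obtain the exterior of $K$ with a pair of slope-$n$ sutures, whose sutured Floer homology has rank $n$ because $K$ is an $L$-space knot and $n>2g(K)-1$. Those two parts are fine.

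For condition (3), however, your primary argument has a genuine gap, which you only partially acknowledge. First, Juh\'asz's decomposition theorem is an isomorphism of \emph{relatively} graded groups: the sutured Floer homology of the decomposed manifold carries no absolute Maslov grading, so there is no ``definite Maslov grading'' of the top generator of $\widehat{\HFK}(K_n)$ that you can transport back and correct by $\tfrac{\ell-1}{2}$; the Maslov gradings of $\widehat{\HFK}(K_n)$ (the core of $n$-surgery) are in any case rational numbers tied to the correction terms of $S^3_n(K)$, not $0$. Second, and more fundamentally, condition (3) is a statement about the maximum Maslov grading over \emph{all} Alexander gradings --- its content is the upper bound, since the lower bound $\geq\tfrac12$ is automatic from the spectral sequence to $\widehat{\HF}(S^1\times S^2)$ --- and pinning down the gradings of the top Alexander summand says nothing about the gradings below it. Your fallback, citing the complete computation of Gorsky--Hom~\cite{gorsky2017cable}, does close both holes and is a legitimate proof; the paper itself remarks that all three conditions can be deduced this way. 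But note that this makes your sutured argument for (3) non-load-bearing, and it is precisely the citation the paper set out to avoid: the paper instead computes $\widehat{\HFK}(K_{2,2n\pm1})$ via immersed curves~\cite{hanselman2019cabling}, feeds these into the skein exact triangle~\cite{Holomorphicdisksandknotinvariants} to obtain $\widehat{\HFK}(K_{2,2n})$ with the braid orientation, and then uses the orientation-reversal formula, whose grading shift is governed by the linking number $n$, to pass to the annular orientation and read off the maximum Maslov grading $\tfrac12$. So: (1) and (2) are correct and identical in approach; (3) is correct only by outsourcing to Gorsky--Hom, and your intrinsic argument for it does not work as stated.
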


Again these conditions can be deduced from~\cite[Theorem 3]{gorsky2017cable}, but we provide a proof using Juh\'asz's surface decomposition theorem~\cite[Theorem 1.3]{juhasz2008floer} and a skein exact triangle for knot Floer homology.

\begin{remark}\label{rem:HFKalt}
    While the knot Floer homology of a multi-component link was originally defined via knotification, it can alternately be defined as the sutured Floer homology of the link exterior equipped with parallel pairs of oppositely oriented meridional sutures. In this context the Alexander grading can be defined by evaluating $\spin^c$ structures on the image of Seifert surface for $L$ in $H_2(X(L),\partial(X(L)))$. The correspondence between these two approaches is given in~\cite[Theorem 1.1]{HolomorphicdiskslinkinvariantsandthemultivariableAlexanderpolynomial}, see~\cite[Section 10]{HolomorphicdiskslinkinvariantsandthemultivariableAlexanderpolynomial} for further details.
\end{remark}

\begin{proof}
The maximal Euler characteristic surface bounding a $(2,2n)$-cable knot is an annulus if $K$ is non-trivial, whence condition 1 follows. Decomposing along that annulus gives the exterior of $K_n$, which has sutured Floer homology of rank $n$ since $K$ is an $L$-space knot and $n> 2g(K)-1$, proving condition 2.
\end{proof}

\begin{lemma}\label{lem:2comps}
    Suppose $L$ is a link with $\widehat{\HFK}(L)\cong\widehat{\HFK}(K_{2,2n})$ for some knot $K$. Then $L$ has two components.
\end{lemma}

\begin{proof}
    Suppose $L$ is as in the statement of the Lemma. Observe that $L$ and $K_{2,2n}$ have the same Conway polynomial. Also $K_{2,2n}$ has linking number $-n$ and two components. By~\cite[Theorem 1]{hoste1985firstcoefficientoftheconwaypolynomial}, the coefficient of $z$ in the Conway polynomial is $n\neq 0$. On the other hand, for an $m$ component link, the Conway polynomial has minimum possible degree $m-1$~\cite[Lemma 2.1]{hoste1984arf}. It follows that $L$ has at most $2$ components. Recall that there is a spectral sequence from $\widehat{\HFL}(L)$ to $\widehat{\HF}(\#^{m-1}(S^1\times S^2))$, where $m$ is the number of components of $L$~\cite[Lemma 3.6]{Holomorphicdisksandknotinvariants}. Moreover, $\rank(\widehat{\HF}(\#^{m-1}(S^1\times S^2)))$ is odd if and only if $m=1$~\cite[Lemma 9.1]{ozsvath2004Holomorphicdisksandtopologicalinvariantsforclosedthreemanifolds}. Thus since $\rank(\widehat{\HFK}(L))$ is even it follows that $L$ cannot be a knot, so that $L$ has exactly two components.
\end{proof}
We now prove Theorem~\ref{HFK22n} using the following Lemma, noting the key properties listed in Proposition~\ref{prop:key}.

\begin{lemma}\label{HFKLspace}
Suppose $K$ is a non-trivial $L$-space knot and $\widehat{\HFK}(L)\cong\widehat{\HFK}(K_{2,2n})$ with $n> 2g(K)-1$. Then $L$ is isotopic to $K'_{2,2n}$ where $K'$ is an $L$-space knot with $2g(K')-1<n$.
\end{lemma}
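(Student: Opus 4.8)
The plan is to mimic the argument of Proposition~\ref{HFLLspace}, but working with knot Floer homology rather than link Floer homology and exploiting the three numerical properties established in the preceding proposition. The central tool is again Juhasz's sutured decomposition formula~\cite{juhasz2008floer}, applied to a distinguished Euler characteristic $0$ surface embedded in the exterior of $L$. First I would extract structural information about $L$ from the hypothesis $\widehat{\HFK}(L)\cong\widehat{\HFK}(K_{2,2n})$. Since $\widehat{\HFK}(L)$ detects the Euler characteristic of a maximal Seifert surface (via~\cite{ni2006note,ozsvath2004holomorphic}), property~(1) forces the maximal Euler characteristic surface for $L$ to be an annulus; here I would need to rule out the possibility that $L$ has a split unknotted component or is a genuine knot rather than a two-component link. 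Property~(1) together with the top Alexander grading rank~(2) should pin down that $L$ is a two-component link bounding an annulus $\Sigma$, so that $L$ is realized as the boundary of an annulus with core some knot $K'$ --- that is, $L=K'_{2,2p}$ for some framing $p$.

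Once $L$ is identified as a cable $K'_{2,2p}$, the next step is to determine the framing $p$ and show $K'$ is an $L$-space knot. I would invoke Hoste's result~\cite{hoste1985firstcoefficientoftheconwaypolynomial}, exactly as in the proof of Proposition~\ref{HFLLspace}, to read off the linking number of the two components from the knot Floer homology (using that knot Floer homology categorifies the Alexander polynomial and hence sees the Conway polynomial); this should force the linking number to equal $n$ and hence $p=n$. With the framing fixed, I would decompose the exterior $X(L)$ along the annulus $\Sigma$. Since $\chi(\Sigma)=0$ and the relevant quantities $r(\Sigma,t_0)$ and $I(\Sigma)$ in Juhasz's notation vanish, the decomposition yields the sutured manifold $X(K'_n)$ with the appropriate meridional sutures, whose sutured Floer homology is $\widehat{\HFK}(K'_n)$. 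Comparing ranks via property~(2) gives $\rank(\widehat{\HFK}(K'_n))=n$ with $n>2g(K)-1$, which by the definition of $L$-space knot in Section~\ref{SFHbackground} shows $K'$ is an $L$-space knot.

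The genuinely new ingredient here, compared to the link Floer homology argument, is property~(3): the maximum Maslov grading being $\tfrac12$. The point is that in the knot Floer setting the Alexander grading alone is a $\Z$-valued rather than $\Z^2$-valued invariant, so the clean decomposition into the pieces $I_k$ used in Proposition~\ref{HFLLspace} is unavailable, and one cannot directly isolate the summand corresponding to the decomposing surface by Alexander grading alone. The Maslov grading constraint is what compensates: it should be used to certify that the portion of $\widehat{\HFK}(L)$ surviving the decomposition is exactly the piece of rank $n$ sitting in the top Alexander grading, and to rule out cancellation or contributions from other gradings that would spoil the rank count. I would therefore use~(3) to match the maximal Maslov grading on the two sides of Juhasz's formula, confirming that the decomposition along $\Sigma$ realizes the top Alexander grading summand as $\widehat{\HFK}(K'_n)$ without loss.

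I expect the main obstacle to be precisely this last point: controlling the interaction between the Maslov and Alexander gradings under Juhasz's decomposition so that the sutured Floer homology of $X(K')$ with $m$-framed sutures is genuinely identified with a rank-$n$ piece of $\widehat{\HFK}(L)$, rather than merely bounding its rank. A secondary subtlety is verifying at the outset that $\Sigma$ is a connected annulus and that $L$ has no split components --- the orientation convention (orienting $K_{2,2n}$ as the boundary of an annulus) is essential here, and I would want to confirm that the detected Thurston-norm data, combined with property~(1), leaves no room for a disconnected or higher-genus maximal surface. Modulo these checks, Theorem~\ref{HFK22n} then follows from Lemma~\ref{HFKLspace} together with Lemma~\ref{alexanderlemma} in exactly the same manner that Theorem~\ref{HFLmmn} followed from Proposition~\ref{HFLLspace}.
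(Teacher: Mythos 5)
Your proposal reproduces the second half of the paper's argument correctly (annulus decomposition, Juh\'asz's formula, Hoste's result to pin the framing at $n$, and the rank count $\rank(\widehat{\HFK}(K'_n))=n$ in the top Alexander grading), but it has a genuine gap at the first step: you never establish that $L$ is a two-component link, and the tool needed for this is precisely the one you misallocate. In the paper, property (3) --- the maximal Maslov grading being $\tfrac{1}{2}$ --- is used exactly here: for an $\ell$-component link, $\widehat{\HFK}(L)$ admits a spectral sequence converging to $\widehat{\HF}(\#^{\ell-1}(S^1\times S^2))$, whose maximal Maslov grading is $\tfrac{\ell-1}{2}$; since the first page has maximal Maslov grading $\tfrac{1}{2}$, this forces $\ell\le 2$, and then the even total rank of $\widehat{\HFK}(K_{2,2n})$ rules out $\ell=1$, since a knot has odd total rank ($\Delta_K(1)=\pm1$). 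Without this step, your claim that property (1) ``forces the maximal Euler characteristic surface to be an annulus'' does not go through: by Ni's theorem the top Alexander grading of $\widehat{\HFK}$ equals $\tfrac{\ell-\chi(L)}{2}$, so a top grading of $1$ only gives $\chi(L)=\ell-2$, which is compatible with, for instance, a three-component link containing a split unknot. You flag ruling out split components and the knot case as checks to be done, but properties (1) and (2) alone cannot do them; the Maslov-grading constraint is not optional extra data but the crux of the component count.

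Relatedly, the role you assign to property (3) --- controlling ``cancellation'' so that the sutured Floer homology of the decomposed manifold is ``genuinely identified with a rank-$n$ piece \dots rather than merely bounding its rank'' --- addresses a non-issue. Juh\'asz's decomposition theorem gives a direct isomorphism between the sutured Floer homology of the manifold obtained by decomposing $X(L)$ along the annulus and the summand of $\widehat{\HFK}(L)$ in the outermost relative $\spin^c$ structures, i.e.\ the top Alexander grading summand; no spectral sequence or cancellation intervenes at that stage, so no Maslov-grading control is required there. Once the component count is settled and Hoste's result gives linking number $n$, the decomposition immediately yields $\rank(\widehat{\HFK}(K'_n))=\rank(\widehat{\HFK}(L,1))=n$, so $K'$ is an $L$-space knot by the paper's definition --- that part of your outline is correct and matches the paper.
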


\begin{proof}
Suppose $K,n$ are as in the statement of the Theorem. By Lemma~\ref{lem:2comps}, we have that $L$ has two components.

Since the maximal Alexander grading is $1$ and $L$ has non-zero linking number, $L$ bounds an annulus. Decomposing along this annulus yields a sutured manifold with sutured Floer homology given by $\widehat{\HFK}(K'_n)$ for some knot $K'$. Note that $\rank(\widehat{\HFK}(K'_n))=n$, hence $K'$ is indeed an $L$-space knot with $2g(K')-1<n$. 
\end{proof}

\begin{proof}[Proof of Theorem~\ref{HFK22n}]
Suppose $K$, $L$ are as in the statement of the theorem. Lemma~\ref{HFKLspace} implies that $L$ is given by $K'_{2,2n}$ for some $L$-space knot $K'$. Applying Lemma~\ref{alexanderlemma} shows that $\widehat{\HFK}(K)\cong\widehat{\HFK}(K')$, as desired.
\end{proof}
 
\begin{proof}[Proof of Corollary~\ref{HFKdetects}]

Suppose $L$ is a link with knot Floer homology of one of the two given types. Note that $T(2,3)$ and $T(2,5)$ are $L$-space knots. Thus Theorem~\ref{HFK22n} implies $L$ is a $(2,2n)$-cable of a knot with the same knot Floer homology as $T(2,3)$ or $T(2,5)$ respectively. The result then follows from the fact that knot Floer homology detects each of these two knots~\cite[Corollary 1.5]{ghiggini2008knot},~\cite[Theorem A]{farber2022fixed}.
\end{proof}
We conclude this section with the following remark:
 \begin{remark}\label{rmk:negativen}
     Versions of Theorem~\ref{HFK22n} and Theorem~\ref{HFLmmn} as well as Corollary~\ref{HFKdetects} and Corollary~\ref{HFLdetects} can be obtained for $(n,mn)$ cables with $n<0$. The relevant proofs in this section and Section~\ref{HFLmmnsection} follow through in the $n<0$ case if we take $m<1-2g(K)$.
 \end{remark}

\section{Unlink and Hopf link Summands, and Link Floer Homology}\label{HFLhopfunknotsection}

In this section we study the link Floer homology of links obtained by adding meridional components or split unknots. We obtain results very similar to~\cite[Proposition 9.2]{binns2020knot} with techniques similar to those applied to prove Theorem~\ref{HFLmmn} and Theorem~\ref{HFK22n}.

Here we take $L$ to be a link, $H_\pm$ to be the positive and negative Hopf links and $U$ be the unknot.

\begin{proposition}\label{HFLunknot}
 Suppose $L'$ is a link such that $\widehat{\HFL}(L')\cong\widehat{\HFL}(L\sqcup U)$. Then $L'$ is isotopic to $L''\sqcup U$ where $L''$ is a link such that $\widehat{\HFL}(L'')\cong\widehat{\HFL}(L)$.
\end{proposition}

\begin{proposition}\label{HFLhopf}
Suppose $L'$ is a link such that $\widehat{\HFL}(L')\cong\widehat{\HFL}(L\#H_\pm)$. Then $L'$ is isotopic to $L''\# H_\pm$ where $L''$ is a link such that $\widehat{\HFL}(L'')\cong\widehat{\HFL}(L)$.
\end{proposition}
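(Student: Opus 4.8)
The plan is to adapt the strategy of Theorem~\ref{HFLmmn}: locate an essential annulus in $X(L')$ using that link Floer homology detects the Thurston polytope~\cite{ozsvath2008linkFloerThurstonnorm}, decompose along it via Juhász's formula~\cite{juhasz2008floer}, and identify the resulting sutured manifold as the exterior of the desired summand. (I write $L'$ throughout for the link hypothesized to satisfy $\widehat{\HFL}(L')\cong\widehat{\HFL}(L\#H_\pm)$.) The guiding topological picture is that the summand $H_\pm$ attaches an unknotted component $h$ to $L$ which is a meridian of the connect-sum component $L_1$, linking it with sign $\pm$ and linking nothing else; the two components of the Hopf link cobound an annulus $A$, and decomposing $X(L\#H_\pm)$ along $A$ should recover $X(L)$ with its meridional sutures.

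First I would compute the model. Let $\xi\in H_1(X(L\#H_\pm))$ be the Euler-characteristic-$0$ class carried by the Hopf annulus $A$, cobounded by a longitude of $h$ and the slope-$(\pm1)$ curve on $\partial\nu(L_1)$. Decomposing the model exterior $X(L\#H_\pm)$ along $A$ merges $\partial\nu(h)$ and $\partial\nu(L_1)$ into a single torus and, since $h$ is only a meridian of $L_1$, yields $X(L)$ with its meridional sutures together with an auxiliary suture. Applying Juhász's decomposition formula~\cite{juhasz2008floer} then identifies the extremal (in the $\xi$-direction) summand of $\widehat{\HFL}(L\#H_\pm)$ with $\widehat{\HFL}(L)$, up to a rank-two tensor factor; this is the analogue of Proposition~\ref{HFLdecompositionrank}. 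In parallel I would invoke Hoste's theorem~\cite{hoste1985firstcoefficientoftheconwaypolynomial} to see that $L'$ has the same linking matrix as $L\#H_\pm$, so $L'$ has a distinguished component $h'$ with $\lk(h',L'_1)=\pm1$ and $\lk(h',L'_i)=0$ otherwise; note the sign of the linking number already distinguishes $H_+$ from $H_-$.

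Next, using Thurston-polytope detection, the class $\xi$ is represented in $X(L')$ by a norm-minimizing surface $\Sigma$ of Euler characteristic $0$. As in the proof of Proposition~\ref{HFLLspace}, I would discard closed components and observe that $h'$ and $L'_1$ are not split unknots, since their mutual linking number is nonzero; hence $\Sigma$ may be taken to contain no sphere or disk components and is a union of annuli, and the linking-number constraints force a single annulus $A'$ cobounded by $h'$ and the slope-$(\pm1)$ curve on $\partial\nu(L'_1)$. Decomposing $X(L')$ along $A'$ and applying Juhász's formula identifies the sutured Floer homology of the complementary piece with the extremal summand of $\widehat{\HFL}(L')$, which by the model computation has the same rank as $\widehat{\HFL}(L)$. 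Reading off that the decomposed manifold is a link exterior in $S^3$ with meridional sutures produces a link $L''$ with $\widehat{\HFL}(L'')\cong\widehat{\HFL}(L)$, and reversing the decomposition by regluing the Hopf annulus exhibits $L'$ as $L''\#H_\pm$.

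The main obstacle I anticipate is the final topological identification: showing that the piece obtained by decomposing along $A'$ is genuinely the exterior of a link in $S^3$, rather than some other sutured manifold sharing the same sutured Floer homology, and that regluing reproduces a connected sum with $H_\pm$ rather than a more general satellite or twisted gluing. This requires controlling the sutures produced by Juhász's formula --- verifying that the framing of $\partial A'$ is exactly the Hopf framing, so that capping off yields an unknotted meridian --- and ruling out that $h'$ could be knotted or link $L''$ nontrivially. The orientation/sign data is the one piece I expect to come for free, since $\lk(h',L'_1)=\pm1$ is detected by Hoste and pins down whether the summand is $H_+$ or $H_-$.
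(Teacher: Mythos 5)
Your proposal diverges from the paper's proof at exactly the point you flag as the ``main obstacle,'' and that obstacle is a genuine gap in your approach which the paper's argument is structured to avoid entirely. The paper does not hunt for annuli in $X(L')$ via Thurston-polytope detection and then try to reconstruct the gluing. Instead it applies a component-wise result of Juh\'asz, stated just before the proof:
$$2\max\{A_1:\widehat{\HFL}(L',A_1)\neq0\}=\max\{-2g(S)-n(S,L'-L'_1):\partial S=L'_1\},$$
where $n(S,\cdot)$ is the \emph{geometric intersection number} of a surface bounded by the component $L'_1$ with the remaining components. Since the left-hand side is fixed by the hypothesis $\widehat{\HFL}(L')\cong\widehat{\HFL}(L\#H_\pm)$, the extremal surface $S$ with $\partial S=L'_1$ must have $g(S)=0$ and $n(S,L'-L'_1)=1$: it is a once-punctured disk. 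A component bounding a disk that meets the rest of the link in a single transverse point is by definition a meridian of the component it punctures, so $L'=L''\#H_\pm$ follows immediately, with the sign determined by the sign of that intersection. The connected-sum structure is thus established \emph{before} any sutured decomposition is performed; the decomposition along the annulus $S\cap X(L')$ is used only afterwards, for the algebraic step $V\otimes\widehat{\HFL}(L'')\cong V\otimes\widehat{\HFL}(L)$. Your plan, which works only with annuli properly embedded in the exterior and the plain Thurston norm, never produces this punctured disk, and so the worries you raise (is the decomposed piece a link exterior in $S^3$? is the regluing a connected sum rather than a satellite operation?) remain unresolved in your writeup; resolving them would require the merge-and-count analysis of boundary multiplicities as in Proposition~\ref{HFLLspace}, plus an argument pinning the boundary slope on $\partial\nu(L'_1)$ to the meridian --- work your proposal gestures at but does not do.

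Two further concrete errors. First, the Hopf annulus is misidentified: in $X(L\#H_\pm)$ the relevant annulus is $D\cap X(L\#H_\pm)$, where $D$ is the disk bounded by $h$ meeting $L_1$ once, so its boundary consists of a longitude of $h$ and a \emph{meridian} of $L_1$, not a slope-$(\pm1)$ curve. Indeed, a slope-$(\pm1)$ curve on $\partial\nu(L_1)$ carries a nonzero multiple of $[\lambda_{L_1}]$, whose expansion in $H_1(X(L\#H_\pm))$ contains a $[\mu_h]$ term; it therefore cannot be homologous in the exterior to a longitude of $h$, so no annulus with your stated boundary exists in general. Second, Hoste's theorem does not determine the full linking matrix of $L'$: it identifies the first coefficient of the Conway polynomial with a single cofactor (a determinant built from the linking numbers), which is precisely how the paper uses it in Proposition~\ref{HFLLspace}. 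Your claim that $L'$ has a component $h'$ with $\lk(h',L'_1)=\pm1$ and $\lk(h',L'_j)=0$ for $j\neq1$ is therefore unjustified as stated --- though it is also unnecessary, since in the paper's argument exactly this linking data falls out of the once-punctured disk.
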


Note that the connect sum operation is not a well defined operation on links; it depends on the component to which you connect sum. We suppress this to ease notation.

Proposition~\ref{HFLhopf} is a version of~\cite[Proposition 9.2]{binns2020knot}. We note again that this yields infinite families of links that link Floer homology detects but neither Khovanov homology nor knot Floer homology detect -- forests of unlinks, for example.

Let $n(S,L)\geq 0$ denote the \emph{geometric intersection number} of a surface $S$ and an $n$ component link $L$. A result of Juh\'asz~\cite[Lemma 3.10]{juhasz2008floer} implies that the maximum not trivial $A_{1}$ grading of $\widehat{\HFL}(L)$ is given by

\begin{equation}\label{eq:maxA1grading}
\frac{1}{2}\max\{-2g(S)-n(S,L-L_1):\partial S=L_1\}\end{equation}
 More specifically, ~\cite[Lemma 3.10]{juhasz2008floer} states that the maximal non-trivial $A_1$ grading is given by $\frac{1}{2}(\chi(S')+I(S')-r(S',t))$, where these quantities are defined in  ~\cite[Lemma 34]{juhasz2008floer}, and $S'$ is the image of $S$ in the exterior of $L$. Equation~\ref{eq:maxA1grading} is the main technical tool we use to prove our propositions.

\begin{proof}[Proof of Proposition~\ref{HFLunknot}]

Suppose $L$ and $L'$ are as in the statement of the theorem. Order the components of $L\sqcup U$ so that the named unknot $U$ is the first component.

Since the maximal $A_1$ grading of $\widehat{\HFL}(L')$ is $0$, we see that $L'_1$ -- the first component of $L'$, $L'_1$ -- bounds a surface $S$ with $0=-2g(S)-n(S,L'-L_1')$. It follows that $S$ is a disk and $n(S,L'-L_1')=0$, whence $L'_1$ is a split unlinked component of $L'$.

It thus suffices to show $L''$ -- the link consisting of the components of $L'$ without $L'_1$ -- has the same knot Floer homology as $L$. But this is immediate since $$\widehat{\HFL}(L)\otimes V \cong \widehat{\HFL}(L\sqcup U)\cong V\otimes\widehat{\HFL}(L'')$$ where here $V$ is a rank $2$ vector space supported in multi-Alexander grading $0$. Thus, since we are working with vector spaces over $\Z/2$, $\widehat{\HFL}(L'')\cong\widehat{\HFL}(L)$, as required.
\end{proof}

The proof of Proposition~\ref{HFLhopf} requires a slightly more sophisticated analysis.

\begin{proof}[Proof of Proposition~\ref{HFLhopf}]

Suppose $L$ and $L'$ are as in the statement of the theorem. After relabeling we may take the 1st component of $L\# H_\pm$ to be the unknotted component of $H_\pm$ which remains a component in $L\# H_\pm$.

Since the maximal $A_1$ grading of $\widehat{\HFL}(L')$ is $1$, we see that $L'_1$ -- the first component of $L'$ -- bounds a surface $S$ with $-1=-2g(S)-n(S,L-L'_1)$. It follows that $S$ is a once punctured disk. It follows that $L'_1$ is the meridian of a component $L_i'$ of $L'$ where $i\neq 1$. That is, $L'$ is of the form $L''\#H_{\pm}$. Let $L_i''$ be the component of $L''$ corresponding to $L_i'$.

It thus suffices to show $L''$ has the same Link Floer homology as $L$. Note that $S\cap X(L)$ is an annulus, $A$. Decomposing along this annulus yields a sutured manifold with sutured Floer homology given by $V\otimes\widehat{\HFL}(L'')$. Here $V$ is a rank $2$ vector space supported in $A_{L_i''}$ grading $\pm \frac{1}{2}$ and in $A_K$ grading $0$ for all other $K$. Thus, since we are working with vector spaces over $\Z/2$, $\widehat{\HFL}(L'')\cong\widehat{\HFL}(L)$, as required.
\end{proof}

\section{A detection result for Link Floer homology}~\label{2braidsection}
In this section we give another infinite family of detection results for link Floer homology. Let $L_n$ denote the closure of the 2-braid $\sigma^{n}$ together with its braid axis, as shown in Figure~\ref{fig:Ln}, with specified orientations. Let $L_n'$ denote any copy of $L_n$ with arbitrary orientation.

\begin{figure}[ht]
  \includegraphics[width=20cm]{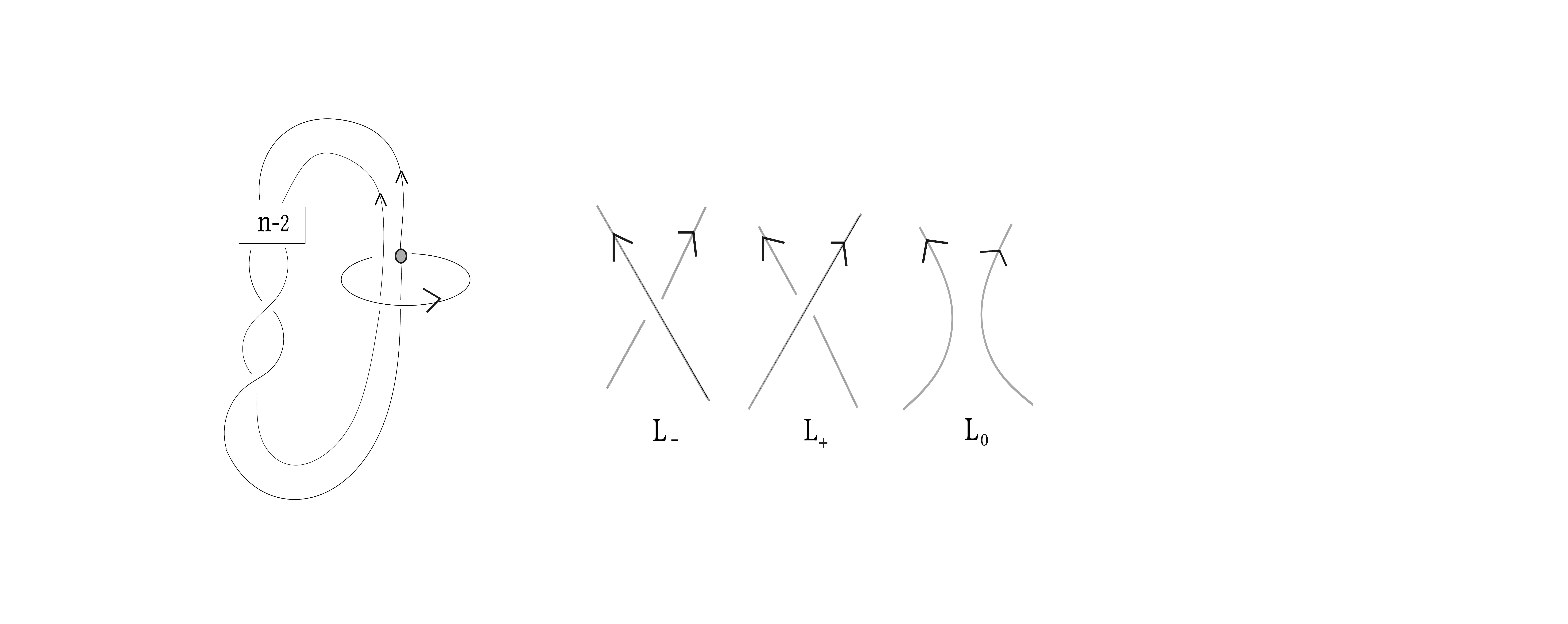}
  \caption{The link $L_n$. The highlighted crossing is one at which we consider a resolution. On its right, a skein triple crossing is drawn.}
  \label{fig:Ln}
\end{figure}

\begin{theorem}\label{2braid}
Link Floer homology detects $L_n'$ for all $n\in\Z$.
\end{theorem}

For $n:=2m+1$ odd we take the first component of $L_n$, $K_1$, to be the braid axis and $K_2$, the second component of $L_n$, to be the braid closure. Note that  $L_{-1}$ is $T(2,4)$ oriented as the boundary of an annulus. We highlight some key features of $\widehat{\HFL}(L_{2m+1})$:

\begin{lemma}\label{lem:oddn}
   Let $m\in\Z$. $\widehat{\HFL}(L_{2m+1})$ satisfies the following conditions:
\begin{enumerate}
    \item The maximum non-trivial $A_1$ grading of $\widehat{HFL}(L_{2m+1}$) is one and $$\sum_{A_2\in\Q}dim(\widehat{HFL}(L_{2m+1}; (1,A_2)) = 2.$$
    \item If $m\geq 0$ the maximum $A_1+A_2$ grading in which $\widehat{\HFL}(L_{2m+1})$ is non-trivial is $m+2$ and \begin{align*}\underset{A_1,A_2\in\Q}{\bigoplus}\widehat{\HFL}(L_{2m+1};A_1+A_2=m+2))\cong\F_0.\end{align*}
    \item If $m< -1$ the maximum value of $A_1+A_2$ grading in which $\widehat{\HFL}(L_{2m+1})$ is non-trivial is $-m$ and \begin{align*}\underset{A_1,A_2\in\Q}{\bigoplus}\widehat{\HFL}(L_{2m+1};A_1+A_2=-m))\cong\F_{-2m-2}.\end{align*}
\end{enumerate}
\end{lemma}

In the proof of part (2) and part (3) of this statement, we use the Skein exact triangle in knot Floer homology. Note that there exist different Maslov grading conventions for knot Floer homology. We use the convention that the Maslov grading for $\widehat{\HFK}(L)$ is $\Z+\frac{n}{2}$-valued, where $n$ is the number of components of $L$. This is the convention used in~\cite{Holomorphicdisksandknotinvariants}.
\begin{proof}

The first part of the first statement follows from Equation~\ref{eq:maxA1grading}. The second part follows from the fact that the second component of $L$ is braided about the first, which is unknotted, combined with Martin's braid axis detection result~\cite[Proposition 1]{martin2022khovanov}.

The second statement can be deduced as follows. Consider the skein exact sequence ~\cite[Equation 8]{Holomorphicdisksandknotinvariants} corresponding to resolving the highlighted crossing in Figure~\ref{fig:Ln}. Here $L_{2m+1}$ is the positive resolution, the zero resolution is the torus knot $T(2,2m+3)$ and the negative resolution is $T(2,2m+1)$ connect sum a positive Hopf link, denoted by $H_+$. Recall that $H_+$ has knot Floer homology given by:\begin{align*}
\widehat{\HFK}(H_+,A)\cong\begin{cases}
    \F_{1/2}&\text{ if }A=1\\
    \F_{-1/2}^2&\text{ if }A=0\\
    \F_{-3/2}&\text{ if }A=-1\\
    0&\text{ otherwise.}
\end{cases}
\end{align*}

Assume now that $m\geq 0$. Note that both $\widehat{\HFK}(T(2,2m+1)\#H_+)$ and $\widehat{\HFK}(T(2,2m+3))$ have maximal non-trivial Alexander grading $m+1$. The generator of $\widehat{\HFK}(T(2,2m+3))$ in Alexander grading $m+1$ has Maslov grading $0$. Following~\cite[Equation 8]{Holomorphicdisksandknotinvariants}, let $V$ denote the bigraded vector space

\begin{equation}\label{eq:strangeV}
    V:=\F_1[1]\oplus\F_0^2[0]\oplus\F_{-1}[-1] 
\end{equation}

where the subscript denotes the Maslov grading and $[A]$ denotes the Alexander grading of the generator. It follows that $\widehat{\HFK}(T(2,2m+3))\otimes V$ is rank one in the maximal non-trivial Alexander grading, $m+2$, with a generator of Maslov grading $1$. Since the exact triangle from~\cite[Equation 8]{Holomorphicdisksandknotinvariants} respects Alexander gradings, it follows that $\widehat{\HFK}(L_{2m+1})$ is rank one in maximal non-trivial Alexander grading $m+2$. Indeed, it follows from the Maslov grading shift formula that $\widehat{\HFK}(L_{2m+1})$ is supported in Maslov grading $1/2$, which directly implies the stated statement.

Assume now that $m< -1$. $\widehat{\HFK}((T(2,2m+1)\#H_+)$ has maximal non-trivial Alexander grading $-m$. The generator in Alexander grading $-m$ has Maslov grading $-2m-\frac{3}{2}$. $\widehat{\HFK}(T(2,2m+3))$ has maximal non-trivial Alexander grading $-m-2$. It follows that $\widehat{\HFK}(T(2,2m+3))\otimes V$ is rank one in the maximal non-trivial Alexander grading, $-m-1$. Since the exact triangle from~\cite[Equation 8]{Holomorphicdisksandknotinvariants} respects Alexander gradings, it follows that $\widehat{\HFK}(L_{2m+1})$ is rank one in maximal non-trivial Alexander grading $-m$. Indeed, it follows from the Maslov grading shift formula -- see~\cite[Theorem 9.1.2]{ozsvath2015grid}, for example -- that $\widehat{\HFK}(L_{2m+1})$ is supported in Maslov grading $-2m-\frac{3}{2}$, which directly implies the stated statement.

\end{proof}

In the odd case, the key step in the proof of Theorem~\ref{2braid} is the following:

\begin{proposition}\label{prop:odd}
Let $n$ be odd. Suppose $L$ is a link with $\widehat{\HFL}(L)\cong\widehat{\HFL}(L_n')$. Then $L$ is isotopic to $L_m'$ for some odd $m$.
\end{proposition}

\begin{proof}
Suppose $L$ is as in the statement of the theorem. Then $L$ is a two component link, with $\lk(L)=\lk(L_n)=2$ since the Conway polynomial, and hence Link Floer homology, detects the linking number of two component links~\cite{hoste1985firstcoefficientoftheconwaypolynomial}. Note that $\widehat{\HFL}(L)$ is of rank $2$ in one of the maximal $A_i$ gradings. Reorder the components of $L$ so that this is the $A_1$ grading. Note that the $A_1$ grading has span $[-1,1]$. It follows from Equation~\ref{eq:maxA1grading} that $L_1$ bounds a disk which intersects $L_2$ -- the other component of $L$ -- twice geometrically or a genus one surface which doesn't intersect $L_2$. However, $\lk(L)\neq 0$, so $L_2$ must intersect any surface bounding $L_1$. It follows that $L_1$ bounds a disk. Thus $L_1$ is an unknot, which is fibered. Note also that the $L$ cannot have reducible exterior, again as $\lk(L)\neq 0$. It follows that $L_2$ is braided about $L_1$ by~\cite[Proposition 1]{martin2022khovanov}. Since $\lk(L)=2$, $L_2$ is a two stranded braid about $L_1$. Thus $L$ is isotopic to $L_m$ for some odd $m$.
\end{proof}

It thus suffices to show the following:

\begin{proposition}
Let $n$ and $m$ be odd. Suppose $\widehat{\HFL}(L_n')\cong\widehat{\HFL}(L_m')$. Then $m=n$.
\end{proposition}

\begin{proof}
Fix $m$ as in the statement of the Proposition. By reversing the orientation of the second component of of $L_m'$ if necessary we can arrange that the two components of $L_m'$ have positive linking number -- that is we can change $L_m'$ to $L_m$. Suppose $n$ is an odd number with $\widehat{\HFL}(L_n')\cong\widehat{\HFL}(L_m')$. Observe that after performing the same changes on $L_n'$ as we did on $L_m'$ we obtain $L_{\pm n}$ and have that $\widehat{\HFL}(L_{\pm n})\cong\widehat{\HFL}(L_m)$. Here we again use the fact that link Floer homology detects the linking number of two component links and~\cite[Equation 2]{BG}. The result then follows from Lemma~\ref{lem:oddn}, together with the fact that Link Floer homology detects $T(2,4)$~\cite[Theorem 3.2]{binns2020knot}.
\end{proof}
Theorem~\ref{2braid} now follows immediately from the preceding two results in the case that $n$ is odd.

For $n:=2m$ even we take the first component of $L_n$, $K_1$ to be the braid axis. As we did earlier, we highlight some key features of $\widehat{\HFL}(L_{2m})$:

\begin{lemma}\label{lem:evenn}
    
Let $m\in\Z$. $\widehat{\HFL}(L_{2m})$ satisfies the following conditions:

\begin{enumerate}
\item The maximum non-trivial $A_1$ grading of $\widehat{HFL}(L_{2m+1}$) is one and $$\sum_{A_2\in\Q}dim(\widehat{HFL}(L_{2m+1}; (1,A_2)) = 4.$$
    \item  If $m\geq 0$ then the maximum value of $A_1+A_2+A_3$ in which $\widehat{\HFL}(L_{2m})$ is non-trivial is $m+2$ and \begin{align*}\underset{A_i\in\Q}{\bigoplus}\widehat{\HFL}(L_{2m};A_1+A_2+A_3=m+2)\cong\F_0.\end{align*}
    \item If $m< 0$ then the the maximum value of $A_1+A_2+A_3$ in which $\widehat{\HFL}(L_{2m})$ is non-trivial is $1-m$ and \begin{align*}\underset{A_i\in\Q}{\bigoplus}\widehat{\HFL}(L_{2m};A_1+A_2+A_3=1-m)\cong\F_{-2m}.\end{align*}
\end{enumerate}

\end{lemma}
\begin{proof}

The first part of the first statement follows from Equation~\ref{eq:maxA1grading}. The second part follows from the fact that the second component of $L$ is braided about the first, which is unknotted, combined with Martin's braid axis detection result~\cite[Proposition 1]{martin2022khovanov}.

The second and the third statements can be deduced as follows. Consider the skein exact sequence~\cite[Equation 8]{Holomorphicdisksandknotinvariants} corresponding to resolving the highlighted crossing in Figure~\ref{fig:Ln}. Here $L_{2m}$ is the positive resolution, the zero resolution is the torus knot $T(2,2m+2)$ and the negative resolution is $T(2,2m)$ connect sum a positive Hopf link.

Assume now that $m\geq 0$. $\widehat{\HFK}(T(2,2m)\#H_+)$ has maximal non-trivial Alexander grading $m+1$. $\widehat{\HFK}(T(2,2m+2))$ has maximal non-trivial Alexander grading $m+1$. The generator in Maslov grading $m+1$ has Maslov grading $0$. Following~\cite[Equation 8]{Holomorphicdisksandknotinvariants}, let $V$ be as in Equation~\ref{eq:strangeV}. We have that $\widehat{\HFK}(T(2,2m+2))\otimes V$ is rank one in the maximal non-trivial Alexander grading, $m+2$, with a generator of Maslov grading $1$. Since the exact triangle from~\cite[Equation 8]{Holomorphicdisksandknotinvariants} respects Alexander gradings, it follows that $\widehat{\HFK}(L_{2m})$ is rank one in maximal non-trivial Alexander grading $m+2$. Indeed, it follows from the Maslov grading shift formula that $\widehat{\HFK}(L_{2m})$ is supported in Maslov grading $1/2$, which directly implies the statement.

Assume now that $m< 0$. $\widehat{\HFK}(T(2,2m)\#H_+)$ has maximal non-trivial Alexander grading $1-m$. The generator in Alexander grading $1-m$ has Maslov grading $-2m+\frac{1}{2}$. $\widehat{\HFK}(T(2,2m+2))$ has maximal non-trivial Alexander grading $-m-1$. Let $V$ be as in Equation~\ref{eq:strangeV}. It follows that $\widehat{\HFK}(T(2,2m+2))\otimes V$ is rank one in the maximal non-trivial Alexander grading, $-m$. Since the exact triangle from~\cite[Equation 8]{Holomorphicdisksandknotinvariants} respects Alexander gradings, it follows that $\widehat{\HFK}(L_{2m})$ is rank one in maximal non-trivial Alexander grading $1-m$. Indeed, it follows from the Maslov grading shift formula that $\widehat{\HFK}(L_{2m};1-m)$ is supported in Maslov grading $-2m+\frac{1}{2}$, which directly implies the statement.

\end{proof}
In the even case, the key step in the proof of Theorem~\ref{2braid} is the following:

\begin{proposition}
Suppose $n$ is even. Suppose $L$ is a link with $\widehat{\HFL}(L)\cong\widehat{\HFL}(L_n')$. Then $L$ is isotopic to $L_m'$ for some even $m$.
\end{proposition}

\begin{proof}
Suppose $L$ is as in the statement of the proposition. Recall that the Conway polynomial determines the co-factors of the symmetric matrix with entries given by $l_{i,j}=\lk(L_i,L_j)=q$ for $i\neq j$, and $l_{i,i}=-\sum_{1\leq j\leq n, j\neq i}\lk(L_i,L_j)$. Since Link Floer homology determines the Conway polynomial, Link Floer homology also determines the cofactors of the given matrix. Observe that for $L_n$ the co-factors of this matrix are $1+2n\neq 0$. Let $K_1$, $K_2$ and $K_3$ be the components of $L$. Suppose $\lk(K_1,K_2)=\lk(K_1,K_3)=0$. Then the co-factors of this matrix are zero. If follows that at least one of $\lk(K_1,K_2),\lk(K_1,K_3)$ is non-zero.

Consider $\widehat{\HFL}(L)$. The span of the $A_1$ grading is $[-1,1]$. It follows from Equation~\ref{eq:maxA1grading} that either: $K_1$ bounds a disk which intersects $L-K_1$ twice geometrically or $K_1$ bounds a genus one surface which does not intersect $L-K_1$. Since at least one of $\lk(K_1,K_2),\lk(K_1,K_3)$ is non-zero, it follows that $K_1$ is an unknot bounding a disk which intersects $L-K_1$ twice.

Now, $\widehat{\HFL}(L)$ is of rank $4$ in the maximal $A_1$ grading and the span of the $A_1$ grading is $[-1,1]$. Suppose that $L$ has reducible exterior. Then we can write $L=L'\sqcup L''$, where $K_1$ is a component of $L'$ and $L''$ has a single component. The K\"unneth formula for the split sum of two links tells us that $2\rank(\widehat{\HFL}(L''))\cdot\rank(\widehat{\HFL}(L';A_1=1))=4$. It follows that $\rank(\widehat{\HFL}(L''))\leq 2$, so that $L''$ is an unknot, $U$. However, the grading information of $\widehat{\HFL}(L_n)$ implies that it is not of the form $\widehat{\HFL}(L'\sqcup U)$ for any $L'$. This is a contradiction, so $L$ has an irreducible exterior.

Since $L$ has irreducible exterior and $K_1$ is fibered, applying~\cite[Proposition 1]{martin2022khovanov}, we find that $K_1$ is a braid axis for $L-K_1$. Indeed, we have that $L-K_1$ is a two stranded braid about $K_1$, since there is a disk bounded by $K_1$ which intersects $L-K_1$ twice. It follows that $L$ is of the form $L_m$ for some even $m$.
\end{proof}

It only remains to show the following:

\begin{proposition}
Let $n$ and $m$ be even. Suppose $\widehat{\HFL}(L_n')\cong\widehat{\HFL}(L_m')$. Then $m=n$.
\end{proposition}

\begin{proof}

Fix $m$ as in the statement of the Proposition. Suppose the braided components of $L_m'$ are oriented in parallel. By reversing the orientation of the braid axis component we can arrange that the braided components of $L_m'$ have positive linking number with the first component -- that is we can change $L_m'$ to $L_m$. Suppose $n$ is an even number with $\widehat{\HFL}(L_n')\cong\widehat{\HFL}(L_m')$.  Observe that after performing the same changes on $L_n'$ as we did on $L_m'$ we obtain a link $L_{ n}''$ and have that $\widehat{\HFL}(L_{ n}'')\cong\widehat{\HFL}(L_m)$. The result then follows from Lemma ~\ref{lem:evenn}.

Suppose the braided components of $L_m'$ are not oriented in parallel. By reversing the orientation of a braided component, we can arrange that the braided components of $L_m'$ have positive linking number with the braid axis component  -- that is we can change $L_m'$ to $L_m$. Suppose $n$ is an even number with $\widehat{\HFL}(L_n')\cong\widehat{\HFL}(L_m')$.  Observe that after performing the same changes on $L_n'$ as we did on $L_m'$ we obtain a link $L_{n}''$ and have that $\widehat{\HFL}(L_{n}'')\cong\widehat{\HFL}(L_m)$. The result then follows from Lemma~\ref{lem:evenn}.
\end{proof}

Theorem~\ref{2braid} now follows immediately from the preceding lemmas and propositions.

\bibliographystyle{plain}
\bibliography{citations}

\begin{thebibliography}{10}

\bibitem{BG}
John Baldwin and J.~Elisenda Grigsby.
\newblock Categorified invariants and the braid group.
\newblock {\em Proceedings of the American Mathematical Society},
  143(7):2801--2814, 2015.

\bibitem{binns2022rank}
Fraser Binns and Subhankar Dey.
\newblock Rank bounds in link {F}loer homology and detection results.
\newblock {\em Quantum Topology, DOI 10.4171/QT/197; arXiv:2201.03048}, 2022.

\bibitem{binns2020knot}
Fraser Binns and Gage Martin.
\newblock Knot {F}loer homology, link {F}loer homology and link detection.
\newblock {\em to appear Algebraic and Geometric Topology, arXiv:2011.02005},
  2020.

\bibitem{dey2019cable}
Subhankar Dey.
\newblock Cable knots are not thin.
\newblock {\em arXiv preprint arXiv:1904.11591}, 2019.

\bibitem{eftekhary2011knots}
Eaman Eftekhary.
\newblock Knots which admit a surgery with simple knot {F}loer homology groups.
\newblock {\em Algebraic \& Geometric Topology}, 11(3):1243--1256, 2011.

\bibitem{farber2022fixed}
Ethan Farber, Braeden Reinoso, and Luya Wang.
\newblock Fixed point-free pseudo-{A}nosovs and the cinquefoil.
\newblock {\em arXiv preprint arXiv:2203.01402}, 2022.

\bibitem{gabai1983foliations}
David Gabai.
\newblock Foliations and the topology of 3-manifolds.
\newblock {\em Journal of Differential Geometry}, 18(3):445--503, 1983.

\bibitem{ghiggini2008knot}
Paolo Ghiggini.
\newblock Knot {F}loer homology detects genus-one fibred knots.
\newblock {\em American journal of mathematics}, 130(5):1151--1169, 2008.

\bibitem{gorsky2017cable}
E~Gorsky and J~Hom.
\newblock Cable links and {L}-space surgeries.
\newblock {\em Quantum Topology}, 8(4):629--666, 2017.

\bibitem{grigsby_annular_2018}
J.~Elisenda Grigsby, Anthony~M. Licata, and Stephan~M. Wehrli.
\newblock Annular {Khovanov} homology and knotted {Schur}-{Weyl}
  representations.
\newblock {\em Compositio Mathematica}, 154(3):459--502, March 2018.
\newblock Publisher: London Mathematical Society.

\bibitem{grigsby_sutured_2014}
J.~Elisenda Grigsby and Yi~Ni.
\newblock Sutured {Khovanov} homology distinguishes braids from other tangles.
\newblock {\em Mathematical Research Letters}, 21(6):1263--1275, 2014.

\bibitem{hanselman2018heegaard}
Jonathan Hanselman, Jacob Rasmussen, and Liam Watson.
\newblock Heegaard {F}loer homology for manifolds with torus boundary:
  properties and examples.
\newblock {\em Proceedings of the London Mathematical Society},
  125(4):879--967, 2022.

\bibitem{hanselman2023bordered}
Jonathan Hanselman, Jacob Rasmussen, and Liam Watson.
\newblock Bordered {F}loer homology for manifolds with torus boundary via
  immersed curves.
\newblock {\em Journal of the American Mathematical Society}, 2023.

\bibitem{hanselman2019cabling}
Jonathan Hanselman and Liam Watson.
\newblock Cabling in terms of immersed curves.
\newblock {\em Geometry \& Topology}, 27(3):925--952, 2023.

\bibitem{hedden2005knot}
Matthew Hedden.
\newblock On knot {F}loer homology and cabling.
\newblock {\em Algebraic \& Geometric Topology}, 5(3):1197--1222, 2005.

\bibitem{hedden2007knot}
Matthew Hedden.
\newblock Knot {F}loer homology of {W}hitehead doubles.
\newblock {\em Geometry \& Topology}, 11(4):2277--2338, 2007.

\bibitem{hedden2011floer}
Matthew Hedden.
\newblock On {F}loer homology and the {B}erge conjecture on knots admitting
  lens space surgeries.
\newblock {\em Transactions of the American Mathematical Society},
  363(2):949--968, 2011.

\bibitem{hoste1984arf}
Jim Hoste.
\newblock The {A}rf invariant of a totally proper link.
\newblock {\em Topology and its Applications}, 18(2-3):163--177, 1984.

\bibitem{hoste1985firstcoefficientoftheconwaypolynomial}
Jim Hoste.
\newblock The first coefficient of the {C}onway polynomial.
\newblock {\em Proceedings of the American Mathematical Society},
  95(2):299--302, 1985.

\bibitem{juhasz2006holomorphic}
Andr{\'a}s Juh{\'a}sz.
\newblock Holomorphic discs and sutured manifolds.
\newblock {\em Algebraic \& Geometric Topology}, 6(3):1429--1457, 2006.

\bibitem{juhasz2008floer}
Andr{\'a}s Juh{\'a}sz.
\newblock Floer homology and surface decompositions.
\newblock {\em Geometry \& Topology}, 12(1):299--350, 2008.

\bibitem{juhasz2010sutured}
Andr{\'a}s Juh{\'a}sz.
\newblock The sutured {F}loer homology polytope.
\newblock {\em Geometry \& Topology}, 14(3):1303--1354, 2010.

\bibitem{lickorish2012introduction}
WB~Raymond Lickorish.
\newblock {\em An introduction to knot theory}, volume 175.
\newblock Springer Science \& Business Media, 2012.

\bibitem{martin2022khovanov}
Gage Martin.
\newblock Khovanov homology detects {T(2, 6)}.
\newblock {\em Mathematical Research Letters}, 29(3):835--850, 2022.

\bibitem{ni2006note}
Yi~Ni.
\newblock A note on knot {F}loer homology of links.
\newblock {\em Geometry \& Topology}, 10(2):695--713, 2006.

\bibitem{ozsvath2004holomorphic}
Peter Ozsv{\'a}th and Zolt{\'a}n Szab{\'o}.
\newblock Holomorphic disks and genus bounds.
\newblock {\em Geometry \& Topology}, 8(1):311--334, 2004.

\bibitem{Holomorphicdisksandknotinvariants}
Peter Ozsv\'{a}th and Zolt\'{a}n Szab\'{o}.
\newblock Holomorphic disks and knot invariants.
\newblock {\em Advances in Mathematics}, 186(1):58--116, August 2004.

\bibitem{ozsvath2004Holomorphicdisksandtopologicalinvariantsforclosedthreemanifolds}
Peter Ozsv{\'a}th and Zolt{\'a}n Szab{\'o}.
\newblock Holomorphic disks and topological invariants for closed
  three-manifolds.
\newblock {\em Annals of Mathematics}, pages 1027--1158, 2004.

\bibitem{HolomorphicdiskslinkinvariantsandthemultivariableAlexanderpolynomial}
Peter Ozsv\'{a}th and Zolt\'{a}n Szab\'{o}.
\newblock Holomorphic disks, link invariants and the multi-variable {Alexander}
  polynomial.
\newblock {\em Algebraic \& Geometric Topology}, 8(2):615--692, May 2008.
\newblock Publisher: Mathematical Sciences Publishers.

\bibitem{ozsvath2008linkFloerThurstonnorm}
Peter Ozsv{\'a}th and Zolt{\'a}n Szab{\'o}.
\newblock Link {F}loer homology and the {T}hurston norm.
\newblock {\em Journal of the American Mathematical Society}, 21(3):671--709,
  2008.

\bibitem{ozsvath2015grid}
Peter~S Ozsv{\'a}th, Andr{\'a}s~I Stipsicz, and Zolt{\'a}n Szab{\'o}.
\newblock Grid homology for knots and links.
\newblock {\em Mathematical Surveys and Monographs}, 208:410, 2015.

\bibitem{Rasmussen}
Jacob Rasmussen.
\newblock Floer homology and knot complements.
\newblock {\em arXiv:math/0306378}, June 2003.

\end{thebibliography}
\end{document}